\newcommand{\ie}{\emph{i.e.}}
\newcommand{\eg}{\emph{e.g.}}
\newcommand{\cf}{\emph{cf.}}
\newcommand{\Real}{\mathbb{R}}
\newcommand{\Com}{\mathbb{C}}
\newcommand{\Nat}{\mathbb{N}}
\newcommand{\sgn}{\mathop{\mathrm{sgn}}\nolimits}
\newcommand{\supp}{\mathop{\mathrm{supp}}\nolimits}
\newcommand{\Dom}{\mathsf{D}}
\newcommand{\Ran}{\mathsf{R}}
\newcommand{\curl}{\mathop{\mathrm{curl}}\nolimits}
\newcommand{\eps}{\varepsilon}
\newcommand{\sii}{L^2}
\newcommand{\vertiii}[1]{{\left\vert\kern-0.25ex\left\vert\kern-0.25ex\left\vert #1 
    \right\vert\kern-0.25ex\right\vert\kern-0.25ex\right\vert}}
\newcommand{\nspace}{\mathcal{D}}
\newtheorem{Theorem}{Theorem}
\newtheorem{Lemma}{Lemma}
\newtheorem{Corollary}{Corollary}
\theoremstyle{definition}
\newtheorem{Remark}{Remark}
\definecolor{DarkBlue}{rgb}{0,0.1,0.7}
\newcommand\soutD{\bgroup\markoverwith
{\textcolor{DarkBlue}{\rule[.01ex]{2pt}{1pt}}}\ULon}
\newcommand{\Hm}[1]{\leavevmode{\marginpar{\tiny%
$\hbox to 0mm{\hspace*{-0.5mm}$\leftarrow$\hss}%
\vcenter{\vrule depth 0.1mm height 0.1mm width \the\marginparwidth}%
\hbox to
0mm{\hss$\rightarrow$\hspace*{-0.5mm}}$\\\relax\raggedright #1}}}
\begin{document}
%
\title{\textbf{
Spectral stability of Schr\"odinger operators
with subordinated complex potentials 
}}
\author{Luca Fanelli,$^{a}$ \ David Krej\v{c}i\v{r}\'ik$^{b}$ \ and \ Luis Vega$^{c}$}
\date{\small 
\emph{
\begin{quote}
\begin{itemize}
\item[$a)$] 
Dipartimento di Matematica, SAPIENZA Universit\`a di Roma,
P.~le Aldo Moro 5, 00185 Roma;
fanelli@mat.uniroma1.it.%
\item[$b)$] 
Department of Mathematics, Faculty of Nuclear Sciences and 
Physical Engineering, Czech Technical University in Prague, 
Trojanova 13, 12000 Prague 2, Czechia;
david.krejcirik@fjfi.cvut.cz.%
\\
\item[$c)$]
Departamento de Matem\'aticas, Universidad del Pais Vasco, 
Aptdo.~644, 48080 Bilbao, \&
Basque Center for Applied Mathematics (BCAM), 
Alameda Mazarredo 14, 48009 Bilbao, Spain; 
luis.vega@ehu.es \& lvega@bcamath.org.
\end{itemize}
\end{quote}
}
\smallskip
6 December 2016}

\maketitle

\begin{abstract}
\noindent
We prove that the spectrum of Schr\"odinger operators in three dimensions
is purely continuous and coincides with the non-negative semiaxis
for all potentials satisfying a form-subordinate smallness condition.
By developing the method of multipliers, 
we also establish the absence of point spectrum for 
Schr\"odinger operators in all dimensions
under various alternative hypotheses,
still allowing complex-valued potentials with critical singularities.
%
%
\end{abstract}
%

%
%
\section{Introduction}\label{Sec.intro}
Let~$H_0$ be the \emph{free Hamiltonian},
\ie~the self-adjoint operator in $\sii(\Real^d)$
associated with the quadratic form 
\begin{equation*}
  h_0[\psi] := \int_{\Real^d} |\nabla\psi|^2
  \,, \qquad
  \Dom(h_0) := H^1(\Real^d)
  \,.
\end{equation*}
Let $V:\Real^d\to\Com$ be a measurable function 
which  is \emph{form-subordinated} to~$H_0$
with the subordination bound less than one, \ie,
\begin{equation}\label{Ass}
  \exists a < 1
  \,, \quad
  \forall \psi \in H^1(\Real^d) 
  \,, \qquad
  \int_{\Real^d} |V| |\psi|^2 
  \leq a \int_{\Real^d} |\nabla\psi|^2
\end{equation}
In view of the criticality of~$H_0$ in low dimensions,
\eqref{Ass} is admissible for $d \geq 3$ only,
to which we restrict in the sequel. 

Assumption~\eqref{Ass} in particular means that  
the quadratic form 
\begin{equation}\label{eq:newform}
  v[\psi] := \int_{\Real^d} V \, |\psi|^2
  \,, \qquad
  \Dom(v) := \left\{
  \psi \in \sii(\Real^d):\int_{\Real^d} |V| |\psi|^2 < \infty
  \right\}
  \,.
\end{equation}
is relatively bounded with respect to~$h_0$
with the relative bound less than one.
Consequently, the sum $h_V:=h_0+v$ is a closed form 
with $\Dom(h_V) = H^1(\Real^d)$
which gives rise to an m-sectorial operator~$H_V$ in $\sii(\Real^d)$
via the representation theorem (\cf~\cite[Thm.~VI.2.1]{Kato}).  
It is customary to write 
\begin{equation}\label{form.sum}
  H_V = H_0 \dot{+} V
  \,,
\end{equation}
but we stress that this generalised sum in the sense of forms
differs from the ordinary operator sum. 

The purpose of this paper is to show that condition~\eqref{Ass}
is sufficient to guarantee that the spectra of~$H_0$ and~$H_V$ coincide, 
at least under some extra hypotheses.

Recall that the spectrum, $\sigma(H)$,
of a closed operator~$H$ in a complex Hilbert space~$\mathcal{H}$
is determined by the set of points $\lambda \in \Com$ for which 
$H-\lambda:\Dom(H)\to\mathcal{H}$ is not bijective.
Three disjoint subsets of $\sigma(H)$ that exhaust the spectrum
are distinguished:
the \emph{point spectrum} 
$
  \sigma_\mathrm{p}(H) := \{\lambda \in \Com : 
  H-\lambda \mbox{ is not injective} \}
$,
the \emph{continuous spectrum}
$
  \sigma_\mathrm{c}(H) 
  := \{\lambda \in \sigma(H) \setminus \sigma_\mathrm{p}(H) : 
  \overline{\Ran(H-\lambda)} = \mathcal{H} \}
$
and the \emph{residual spectrum}
$
  \sigma_\mathrm{r}(H) 
  := \{\lambda \in \sigma(H) \setminus \sigma_\mathrm{p}(H) : 
  \overline{\Ran(H-\lambda)} \not= \mathcal{H} \}
$.

The spectrum of~$H_0$ is well known to be purely continuous, in fact 
 $
  \sigma(H_0)
  = \sigma_\mathrm{c}(H_0) 
  = [0,+\infty)
$. 
In this paper we show that this spectral property is preserved 
by condition~\eqref{Ass} provided that $d=3$. 

\begin{Theorem}\label{Thm.main}
Let $d=3$ and assume~\eqref{Ass}. 
Then 
$
  \sigma(H_V)
  = \sigma_\mathrm{c}(H_V) 
  = [0,+\infty)
$.
\end{Theorem}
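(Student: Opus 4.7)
The plan is to decompose the claim into four sub-statements:
(a) $\sigma_\mathrm{p}(H_V)=\emptyset$;
(b) $\sigma_\mathrm{r}(H_V)=\emptyset$;
(c) $\sigma(H_V)\subseteq[0,+\infty)$;
(d) $[0,+\infty)\subseteq\sigma(H_V)$.
Together these yield $\sigma(H_V)=\sigma_\mathrm{c}(H_V)=[0,+\infty)$, which is the claim.

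For (a) I would deploy the method of multipliers. Given a putative eigenpair $H_V\psi=\lambda\psi$ with $\psi\in H^1(\Real^3)$, testing against $\psi$ itself produces the form identity $\|\nabla\psi\|^2+\int_{\Real^3}V|\psi|^2=\lambda\|\psi\|^2$; separating real and imaginary parts and using~\eqref{Ass} confines $\lambda$ to the sector $\Sigma:=\{z\in\Com:|\mathrm{Im}\,z|\le\frac{a}{1-a}\mathrm{Re}\,z\}$. To rule out eigenvalues inside $\Sigma$, notably embedded ones on $(0,+\infty)$, I would then test against a radial Morawetz-type multiplier attached to the vector field $x/|x|$: the resulting integral identity contains a positive weighted-Hardy principal term which, because the inequality $a<1$ in~\eqref{Ass} is strict, dominates the complex potential contribution and forces $\psi\equiv 0$.

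For (c), the m-sectoriality of $H_V$ combined with~\eqref{Ass} yields $W(H_V)\subseteq\Sigma$ and hence $\sigma(H_V)\subseteq\overline\Sigma$. To collapse $\overline\Sigma$ down to $[0,+\infty)$ I would invoke the form-version of the Birman--Schwinger principle: for $\lambda\in\Com\setminus[0,+\infty)$, $\lambda\in\rho(H_V)$ iff the operator $I+\sgn(V)|V|^{1/2}R_0(\lambda)|V|^{1/2}$ is invertible on $\sii(\Real^3)$, where $R_0(\lambda):=(H_0-\lambda)^{-1}$. Here the three-dimensional setting is crucial: the explicit kernel $R_0(\lambda)(x,y)=e^{i\sqrt\lambda|x-y|}/(4\pi|x-y|)$ (with $\mathrm{Im}\sqrt\lambda>0$) is pointwise dominated in modulus by $R_0(0)(x,y)=1/(4\pi|x-y|)$, and~\eqref{Ass} is equivalent to $\||V|^{1/2}R_0(0)|V|^{1/2}\|_{\sii\to\sii}\le a<1$; a Schur-type pointwise-domination argument then gives the same bound with $R_0(\lambda)$ in place of $R_0(0)$, so the Neumann series converges. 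For (b), note that $H_V^*=H_0\dot{+}\overline V$ also satisfies~\eqref{Ass} (with $\overline V$ in place of $V$), hence has no point spectrum by (a); since $\sigma_\mathrm{r}(H_V)\subseteq\{\overline\lambda:\lambda\in\sigma_\mathrm{p}(H_V^*)\}$, (b) follows.

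For (d), I would construct Weyl singular sequences: fix $\lambda\ge 0$, a unit $\xi\in\Real^3$ with $|\xi|^2=\lambda$, centres $x_n$ with $|x_n|\to\infty$, and scales $r_n\to\infty$ satisfying $r_n/|x_n|\to 0$; set $\psi_n(x):=r_n^{-3/2}\chi((x-x_n)/r_n)\,e^{i\xi\cdot x}$ for a fixed $\chi\in C_c^\infty(\Real^3)$ of unit $\sii$-norm. A direct differentiation gives $\|(H_0-\lambda)\psi_n\|_{\sii}=O(r_n^{-1})$ while $\psi_n\rightharpoonup 0$; the tail $\|V\psi_n\|$ is handled by first approximating $V$ by compactly supported potentials in the form sense (possible by truncation and dominated convergence under~\eqref{Ass}), so that the approximants vanish on $\mathrm{supp}\,\psi_n$ for $n$ large, and then propagating the approximation through the Birman--Schwinger factorisation used in (c). The main obstacle is step~(a): in $d=3$ the Morawetz principal term is only marginally positive, so the strict inequality $a<1$ must be tracked very sharply through the multiplier calculation; moreover, the complex nature of $V$ precludes the usual self-adjoint simplifications and forces real and imaginary parts to be balanced jointly, with additional care when $V$ carries critical $|x|^{-2}$-type singularities.
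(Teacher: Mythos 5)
Your decomposition into (a)--(d) matches the structure of the paper's proof, and steps (b) and (c) are essentially identical to the paper's: (b) uses $\sigma_\mathrm{r}(H_V)\subseteq\{\overline\lambda:\lambda\in\sigma_\mathrm{p}(H_V^*)\}$ together with $H_V^*=H_{\overline V}$, and (c) rests on exactly the two ingredients the paper uses, namely the equivalence of~\eqref{Ass} with $\big\||V|^{1/2}H_0^{-1/2}\big\|^2\le a$ and the $d=3$ pointwise domination $|G_z(x,y)|\le G_0(x,y)$ giving $\|K_z\|\le a<1$.

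The genuine gap is in step (a). You propose to rule out (in particular embedded) eigenvalues by a Morawetz/multiplier identity, asserting that the weighted-Hardy principal term, together with strictness of $a<1$ in~\eqref{Ass}, dominates the potential terms. This does not work under~\eqref{Ass} alone. After testing the eigenvalue equation against the multiplier $2\nabla G_3\cdot\nabla u+\Delta G_3\,u$ with $G_3=|x|^2$, the potential produces boundary terms of the type $\int |x|\,|V|\,|u|\,|\nabla u|$ and $\int |x|\,|V|\,|u|^2$; these are controlled by Cauchy--Schwarz and Hardy only if one has a \emph{weighted} bound $\|x V u\|\lesssim\|\nabla u\|$, i.e.\ something like~\eqref{Ass.bis} or the refined conditions~\eqref{eq:assV12}--\eqref{eq:assV2}, \emph{not} the unweighted form-subordination~\eqref{Ass}. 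Indeed, this is precisely why the paper's multiplier results (Theorems~\ref{Thm.any} and~\ref{thm:radi}) are stated under strictly stronger hypotheses, and why Theorem~\ref{Thm.main}, which assumes only~\eqref{Ass}, is proved instead by extending the Birman--Schwinger argument of your step (c) up to the boundary. Concretely, the paper's Lemma~\ref{Lem.BS} shows that any eigenfunction $\psi$ yields $\lim_{\eps\to 0}(\phi,K_{\lambda+i\eps}\phi)=-\|\phi\|^2$ with $\phi=|V|^{1/2}\psi$, where the justification of the $\eps\to 0$ limit through $\eps\|M_\eps\|\to 0$ again uses the explicit $d=3$ kernel; combined with $\|K_{\lambda+i\eps}\|\le a<1$ from Lemma~\ref{Lem.bound} this excludes \emph{all} eigenvalues, including $\lambda\in[0,+\infty)$. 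Replacing your multiplier plan in (a) with this boundary limit of the Birman--Schwinger operator would close the gap and make the whole argument correct. Two smaller remarks on (d): your scaled-and-translated sequence with $|x_n|\to\infty$ is more complicated than needed (scaling alone works, and $|v[\phi_n]|\le a\|\nabla\varphi_n\|^2\to 0$ directly by~\eqref{Ass}, avoiding the truncation-and-approximation detour), and since $H_V$ is defined via forms you cannot apply the usual Weyl criterion on $\Dom(H_V)$; you need the form-domain version (the paper's Lemma~\ref{Lem.criterion}), because membership of your test functions in $\Dom(H_V)$ is not guaranteed.
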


The theorem is proved in four steps:
\begin{enumerate}
\setlength\itemsep{0em}
\item[(i)]
Absence of the residual spectrum; \hfill
Section~\ref{Sec.intro}.
\item[(ii)]
Absence of the point spectrum; \hfill 
Section~\ref{Sec.evs}.
\item[(iii)]
Absence of the continuous spectrum in $\Com \setminus [0,+\infty)$; \hfill 
Section~\ref{Sec.cont1}.
\item[(iv)]
Inclusion of $[0,+\infty)$ in the spectrum; \hfill
Section~\ref{Sec.cont2}.
\end{enumerate}
Property~(i) follows at once (in any dimension):
Since the adjoint operator satisfies 
$H_V^* = H_{\overline{V}} = \mathcal{T} H_V \mathcal{T}$,
where~$\mathcal{T}$ is the complex-conjugation operator
defined by $\mathcal{T}\psi:=\overline{\psi}$,
$H_V$ is \emph{$\mathcal{T}$-self-adjoint} (\cf~\cite[Sec.~III.5]{Edmunds-Evans})
and as such it has no residual spectrum (\cf~\cite{BK}).
The absence of eigenvalues~(ii) is established in Section~\ref{Sec.evs}
by means of an argument reminiscent of the Birman-Schwinger principle,
but we emphasise that positive eigenvalues are excluded as well. 
Property~(iii) is proved by a modified version of the previous argument
in Section~\ref{Sec.cont1}.
Finally, in Section~\ref{Sec.cont2},
we establish~(iv) 
with help of an abstract quadratic-form criterion 
for the inclusion of points in the spectrum.

The present paper is primarily motivated by a recent interest
in spectral theory of Schr\"odinger operators with complex potential,
see \cite{Abramov-Aslanyan-Davies_2001,
Frank-Laptev-Lieb-Seiringer_2006,Bruneau-Ouhabaz_2008,
Laptev-Safronov_2009,Demuth-Hansmann-Katriel_2009,Safronov_2010,
Frank_2011,Demuth-Hansmann-Katriel_2013,Enblom1,Frank-Simon}.
However, the role of hypothesis~\eqref{Ass}
to have the conclusion of Theorem~\ref{Thm.main}
seems to be new in the self-adjoint case, too.

As a matter of fact, Simon established 
the absence of eigenvalues in the self-adjoint case for $d=3$
already in~\cite[Thm.~III.12]{SiQF} (see also~\cite[Thm.~XIII.21]{RS4})
by assuming 
\begin{equation}\label{Rollnik}
  \|V\|_R^2 := 
  \iint_{\Real^3\times\Real^3} 
  \frac{|V(x)| |V(y)|}{|x-y|^2} \, dx \, dy < (4\pi)^2
  \,.
\end{equation}
The extension of his method to complex potentials is straightforward.
However, notice that our assumption~\eqref{Ass} is weaker. 
Indeed, \eqref{Ass}~is equivalent to~\eqref{b1},   
while
\begin{equation}\label{Simon.weak}
  \big\||V|^{1/2}H_0^{-1/2}\big\|^2 
  = \big\||V|^{1/2}H_0^{-1}|V|^{1/2}\big\|
  \leq \big\||V|^{1/2}H_0^{-1}|V|^{1/2}\big\|_\mathrm{HS}
  = \frac{\|V\|_R}{4\pi}
  \,,
\end{equation}
where~$\|\cdot\|$ and~$\|\cdot\|_\mathrm{HS}$
denote the operator and Hilbert-Schmidt norms in $\sii(\Real^3)$, respectively.
The last equality in~\eqref{Simon.weak} follows with help of the explicit
formula for the Green function~\eqref{Green} in~$\Real^3$.

To be more specific, notice that,
by virtue of the \emph{classical Hardy inequality}
\begin{equation}\label{Hardy}
  \forall \psi \in H^1(\Real^d)
  \,, \qquad
  \int_{\Real^d} |\nabla \psi|^2
  \geq \left(\frac{d-2}{2}\right)^2 \int_{\Real^d} 
  \frac{|\psi(x)|^2}{|x|^2} \, dx
  \,,
\end{equation}
our hypothesis~\eqref{Ass} is in particular 
satisfied for potentials~$V$ verifying 
\begin{equation}\label{Hardy.sufficient}
  |V(x)| \leq a \left(\frac{d-2}{2}\right)^2 \frac{1}{|x|^2}
\end{equation}
for almost every $x \in \Real^d$.
However, the Hardy potential on the right hand side of this inequality
does not even belong to the Rollnik class
characterised for $d=3$
by the norm~$\|\cdot\|_R$ in~\eqref{Rollnik}.
Furthermore, the location of the continuous spectrum 
without the hypothesis that~$V$ belongs to the Rollnik class
(which ensures the finiteness of the Hilbert-Schmidt norm above)
is less evident in our more general setting.

Our Theorem~\ref{Thm.main} 
is also an improvement upon the non-self-adjoint situation
considered by Frank in \cite[Thm.~2]{Frank_2011}.
First, he establishes the absence
of eigenvalues outside $[0,+\infty)$ only.
Second, his assumption to get the conclusion 
of Theorem~\ref{Thm.main} for $d=3$ is
\begin{equation}\label{Frank.condition}
  \int_{\Real^3} |V(x)|^{3/2} \, dx  < \frac{3^{3/2}}{4 \pi^2}
  \,,
\end{equation}
which is again stronger than ours~\eqref{Ass}.
Indeed, by the H\"older and Sobolev inequalities, 
\begin{equation}
  \int_{\Real^3} |V| |\psi|^2 
  \leq \left(\int_{\Real^3} |V|^{3/2}\right)^{2/3} 
  \left(\int_{\Real^3} |\psi|^{6}\right)^{1/3}
  \leq \left(\int_{\Real^3} |V|^{3/2}\right)^{2/3} 
  \frac{2^{4/3}}{3 \pi^{4/3}} \int_{\Real^3} |\nabla\psi|^{2}
  \,,
\end{equation}
for all $\psi \in H^1(\Real^3)$.
As an example, 
the Hardy potential on the right hand side of~\eqref{Hardy.sufficient}
makes the left hand side of~\eqref{Frank.condition} infinite,
while it is an admissible potential for our Theorem~\ref{Thm.main}.
Finally, let us mention that Frank and Simon have noticed recently
in~\cite{Frank-Simon} that even positive eigenvalues can be excluded.

Our hypothesis~\eqref{Ass} 
is of course intrinsically a smallness condition about~$V$. 
But it is interesting to notice
that it involves potentials with quite rough local singularities, 
\eg~\eqref{Hardy.sufficient}.
It seems that such potentials are not typically covered by previous works 
on the exclusion of embedded eigenvalues, even in the self-adjoint case; 
see \cite{Ionescu-Jerison_2003,Koch-Tataru_2006}
to quote just the most recent results based on
Carleman's estimates.

The extension of Theorem~\ref{Thm.main} to higher dimensions is not obvious,
since our method relies on the pointwise inequality
for Green's functions~\eqref{crucial},
which does not hold for $d > 3$.
As an alternative approach, in Section~\ref{Sec.mult}, 
we develop the technique of multipliers for Schr\"odinger
operators with complex-valued potentials and prove 
the absence of eigenvalues in any dimension
under a stronger hypothesis.
\begin{Theorem}\label{Thm.any}
Let $d \geq 3$ and assume
\begin{equation}\label{Ass.bis}
  \exists b < \frac{d-2}{5d-8}
  \,, \quad
  \forall \psi \in H^1(\Real^d) 
  \,, \qquad
  \int_{\Real^d} r^2 \, |V|^2 \, |\psi|^2 
  \leq b^2 \int_{\Real^d} |\nabla\psi|^2
  \,,
\end{equation}
where $r(x):=|x|$.
Then 
$
  \sigma_\mathrm{p}(H_V) = \varnothing
$.
\end{Theorem}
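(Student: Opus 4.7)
The plan is to apply the method of multipliers directly to the eigenvalue equation $-\Delta\psi + V\psi = \lambda\psi$, testing it against two carefully chosen functions and combining the resulting real/imaginary parts into an estimate that contradicts $\psi\not\equiv 0$ whenever $b<(d-2)/(5d-8)$. Suppose for contradiction that $H_V\psi=\lambda\psi$ for some $\lambda=\lambda_1+i\lambda_2\in\Com$ and some $\psi\in H^1(\Real^d)\setminus\{0\}$, and decompose $V=V_1+iV_2$. The first, elementary, step is to test the equation against $\bar\psi$; splitting real and imaginary parts yields the two auxiliary identities
\begin{equation*}
\int_{\Real^d} |\nabla\psi|^2 + \int_{\Real^d} V_1|\psi|^2 = \lambda_1\int_{\Real^d}|\psi|^2,
\qquad
\int_{\Real^d} V_2|\psi|^2 = \lambda_2\int_{\Real^d}|\psi|^2,
\end{equation*}
which will later be used to eliminate the spectral parameter.

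The heart of the argument is a Morawetz-type identity obtained by testing the equation against a radial multiplier of the form $M\psi:=\partial_r\psi+\frac{d-1}{2r}\psi$, or a close variant, and taking the real part. After integration by parts in $\int(-\Delta\psi)\overline{M\psi}$, the left-hand side produces (for $d\geq 3$) a positive combination controlling $\int|\nabla\psi|^2$ together with an angular term such as $\int|\nabla_\tau\psi|^2/r$. The $\lambda_1$-contribution collapses thanks to the specific coefficient $\tfrac{d-1}{2}$ in $M$, while the $\lambda_2$-term is re-expressed via the second auxiliary identity above. What remains on the right-hand side is essentially $\mathrm{Re}\int V\psi\,\overline{M\psi}$, which I would estimate by Cauchy--Schwarz and the hypothesis~\eqref{Ass.bis},
\begin{equation*}
\left|\int_{\Real^d} V\psi\,\overline{M\psi}\right|
\leq \Big(\int_{\Real^d} r^2|V|^2|\psi|^2\Big)^{\!1/2}\Big(\int_{\Real^d} |M\psi|^2/r^2\Big)^{\!1/2}
\leq b\,\|\nabla\psi\|_{\sii}\cdot C_d\,\|\nabla\psi\|_{\sii},
\end{equation*}
where $C_d$ comes from Hardy-type bounds~\eqref{Hardy} applied separately to $\partial_r\psi/r$ and to $\psi/r^2$. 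A careful accounting of numerical constants should produce $C_d=(5d-8)/(d-2)$, so that the threshold $b<(d-2)/(5d-8)$ is precisely what is required to absorb the $V$-contribution into the positive left-hand side, forcing $\int|\nabla\psi|^2\leq 0$ and hence $\psi\equiv 0$.

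I expect the main obstacle to lie in the non-self-adjoint Morawetz bookkeeping: both $\mathrm{Im}\,\lambda$ and $\mathrm{Im}\,V$ produce mixed-sign contributions that must be either cancelled against one another or re-expressed via the $\bar\psi$-identities, and the sharp constant $(d-2)/(5d-8)$ emerges only if the multiplier coefficient and the Hardy weights are tuned optimally. A secondary technical point is that hypothesis~\eqref{Ass.bis} provides only a weighted $L^2$-control on $V$, with no pointwise regularity; consequently all integrations by parts must be justified purely within the $H^1(\Real^d)$ framework, typically via a density/cut-off argument that also handles the singularity of $M$ at $r=0$.
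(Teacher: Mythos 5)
Your proposal correctly identifies the general framework (a Morawetz-type multiplier identity plus Hardy-type absorption), but there are two concrete gaps that prevent the argument from closing as written, and a third ingredient of the paper that your outline omits entirely.

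\textbf{The Cauchy--Schwarz step fails.} With the multiplier $M\psi := \partial_r\psi + \tfrac{d-1}{2r}\psi$ (which corresponds to the radial generating function $g_3(r)=r$), the right-hand side of the Morawetz identity contains $\int V\psi\,\overline{\partial_r\psi}$. Your proposed estimate
\[
\Big|\int V\psi\,\overline{M\psi}\Big|
\le \Big(\int r^2|V|^2|\psi|^2\Big)^{1/2}\Big(\int \frac{|M\psi|^2}{r^2}\Big)^{1/2}
\]
requires $\int |\partial_r\psi|^2/r^2 \lesssim \|\nabla\psi\|^2$, which is false for general $\psi\in H^1(\Real^d)$ (it is a Hardy inequality \emph{for the gradient}, which needs second derivatives). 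The paper avoids this entirely by taking $g_3(r)=r^2$ instead, so that $\nabla G_3 = 2x$ carries the weight $|x|$, and the Cauchy--Schwarz is performed as $\big|\int |x|\,f\,\overline{\partial_r u^-}\big|\le \|xf\|\,\|\partial_r u^-\|\le \|xf\|\,\|\nabla u^-\|$. The weight sits on the $f$-side where hypothesis~\eqref{Ass.bis} lives, not on the $u$-side where it would produce an uncontrollable term. With your multiplier $M$, the weighted term $\|xf\|$ would have to be paired with $\|\partial_r\psi/|x|\|$, which is not an $H^1$ quantity.

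\textbf{The exponential twist is missing and cannot be bypassed.} You assert that the $\lambda_1$-contribution ``collapses thanks to the specific coefficient $(d-1)/2$'' and the $\lambda_2$-term is ``re-expressed via the second auxiliary identity.'' The first claim is correct for the self-adjoint Morawetz identity, but the Morawetz identity also produces the mixed term $\lambda_2\,\Im\int \nabla G_3\cdot u\,\nabla\bar u$, which has a fundamentally different structure from $\int V_2|\psi|^2=\lambda_2\int|\psi|^2$: one is a gradient pairing, the other a density pairing, and the auxiliary identity cannot convert one into the other. The paper handles this by introducing $u^-(x):=e^{-i\,\sgn(\lambda_2)\lambda_1^{1/2}|x|}u(x)$, for which
\[
|\nabla u^-|^2 = |\nabla u|^2 + \lambda_1|u|^2 - 2\sgn(\lambda_2)\lambda_1^{1/2}\,\Im\big(\bar u\,\partial_r u\big),
\]
and carefully combines the three identities (with $g_1=\tfrac12 g_3''$, $g_2=\sgn(\lambda_2)\,g_3'$, and an additional subtraction of a $G_1(x)=|\lambda_2|\lambda_1^{-1/2}|x|$ identity) so that all $\lambda$-terms assemble into $\|\nabla u^-\|^2$ and $\int|x|\,|\nabla u^-|^2$ with definite signs. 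Without this twist, the $\lambda_2$-mixed terms have no sign and cannot be absorbed.

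\textbf{The case split and the regularisation are not incidental.} The paper's argument in the regime $|\lambda_2|\le\lambda_1$ uses this inequality explicitly (in the estimate of the term labelled $I_3$), and the complementary regime $|\lambda_2|>\lambda_1$ is dispatched by a separate elementary computation; a single unified bookkeeping as you envision does not go through. Finally, the regularisation point you flag is real: the paper performs the whole computation on $u_{R,\delta}:=(u\,\xi_R)\ast\phi_\delta$ and only passes to the limit at the end, which is the technically correct way to handle the singular multiplier at $r=0$ and the infinite-volume integrations by parts. Your instinct that this is necessary is right; your outline would need this scaffolding to be made rigorous, but the more serious issues are the two structural gaps above.

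As for the constant: $(d-2)/(5d-8)$ does not emerge from a single Hardy constant $C_d=(5d-8)/(d-2)$. It is obtained in the paper from the sharper admissibility condition $\tfrac{2(2d-3)}{d-2}\Lambda+\tfrac{\sqrt{2}}{\sqrt{d-2}}\Lambda^{3/2}<1$ (where the $\Lambda^{3/2}$ power comes from a genuinely different estimate of $I_3$ involving an $L^2$-bound on $u$ extracted from the imaginary-part identity), and then simplified to~\eqref{Ass.bis} using the a~priori bound $\Lambda\le(d-2)/2$.
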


Notice that~\eqref{Ass} follows as a consequence of~\eqref{Ass.bis}
by means of the Schwarz inequality 
and the classical Hardy inequality~\eqref{Hardy}. 
Indeed, \eqref{Ass.bis} and~\eqref{Hardy} yield
\begin{equation}\label{eq:1000}
  \int_{\mathbb R^d} |V| |\psi|^2
  \leq
  \|r V\psi\|\left\|\frac{\psi}{r}\right\|
  \leq
  \frac{2b}{d-2}\int_{\mathbb R^d}|\nabla\psi|^2
  \,.
\end{equation}
for all $\psi\in H^1(\mathbb R^d)$,
and $b<(d-2)/2$ due to the restriction in~\eqref{Ass.bis}.

Both~\eqref{Ass} and~\eqref{Ass.bis} are smallness assumptions about~$V$.
Our next step is to look for some alternative conditions which guarantee 
the absence of eigenvalues for~$H_V$, 
in all dimensions $d \geq 3$. 
The idea is to modify 
the proof of Theorem~\ref{Thm.any} by splitting the real and imaginary parts 
of the potential~$V$ and treating them separately.
In order to include potentials which are not necessarily 
subordinated in the spirit of~\eqref{Ass}, we consider the space
\begin{equation}\label{space}
  \nspace(\Real^d) := \overline{C_0^\infty(\Real^d)}^{\vertiii{\cdot}} 
  \,, \qquad
  \vertiii{\psi}^2 := \int_{\Real^d} |\nabla\psi|^2
  + \int_{\Real^d} (\Re V)_+ \, |\psi|^2
  + \int_{\Real^d} |\psi|^2
  \,,
\end{equation}
where we have introduced the notation $f_\pm := \max\{\pm f,0\}$
for any measurable function $f:\Real^d \to \Real$.
Clearly, $\nspace(\Real^d)$ is continuously embedded in $H^1(\Real^d)$
and it coincides with the latter as a set if~\eqref{Ass} holds. 
The form 
$
  h_V^{(1)}[\psi] := \int_{\Real^d} |\nabla\psi|^2
  + \int_{\Real^d} (\Re V)_+ \;\! |\psi|^2
$,
$\Dom(h_V^{(1)}):=\nspace(\Real^d)$, is closed by definition.
Assuming now only that $(\Re V)_-$ and $\Im V$ are form-subordinated
to~$H_0$ with the subordination bound less than one
(\cf~\eqref{eq:assV12} and~\eqref{eq:assV2} below),
the sum $h_V := h_V^{(1)} + h_V^{(2)}$
with
$
  h_V^{(2)}[\psi] :=  -\int_{\Real^d} (\Re V)_- \;\! |\psi|^2
  + \int_{\Real^d} \Im V \;\! |\psi|^2
$
is a closed form with $\Dom(h_V)=\nspace(\Real^d)$.
Of course, $h_V$~coincides with the previously defined form
under the hypothesis~\eqref{Ass}. 
In this more general setting, we also denote by~$H_V$
the m-sectorial operator associated with~$h_V$.

Now we are in a position to state the main result about
the absence of eigenvalues for~$H_V$ under natural 
conditions on~$V$.
\begin{Theorem}\label{thm:radi}
Let $d\geq3$ and assume that there exist non-negative numbers $b_1,b_2,b_3$ 
satisfying 
\begin{equation}\label{eq:assf.Lambdanew}
  b_1^2<1-\frac{2b_3}{d-2} 
  \,,
  \qquad
  b_2^2 + 2 \, b_3 
  + \frac{1}{4} \sqrt{b_3} \, \left(\frac{2}{d-2}\right)^{\frac32}
  < 1
  \,,
\end{equation}
such that, 
for all $\psi\in \nspace(\mathbb R^d)$,
  \begin{align}
  \label{eq:assV12}
  \int_{\mathbb R^d} (\Re V)_- \, |\psi|^2 
  &
  \leq
  b_1^2\int_{\mathbb R^d}|\nabla\psi|^2 \,,
  \\
  \label{eq:asspart}
  \int_{\mathbb R^d}\left[\partial_r(r \, \Re V)\right]_+ \, |\psi|^2
    &
    \leq
   b_2^2\int_{\mathbb R^d}|\nabla\psi|^2 \,,
   \\
   \int_{\mathbb R^d} r^2 \, |\Im V|^2 \, |\psi|^2 
  &
  \leq
  b_3^2\int_{\mathbb R^d}|\nabla\psi|^2 \,,
  \label{eq:assV2}
  \end{align}
where 
$\partial_r f(x) := \frac{x}{|x|} \cdot \nabla f(x)$.
 Then 
$
  \sigma_\mathrm{p}(H_V) = \varnothing
$.
\end{Theorem}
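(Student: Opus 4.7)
The plan is to extend the multiplier technique underlying Theorem~\ref{Thm.any} by separating the real and imaginary parts of~$V$. Suppose, for contradiction, that $\psi \in \nspace(\Real^d) \setminus \{0\}$ and $\lambda = \alpha + i\beta \in \Com$ satisfy $H_V \psi = \lambda\psi$. The two identities driving the proof come from testing the equation against~$\bar\psi$ and against a Pohozaev/dilation multiplier. The first test, splitting into real and imaginary parts, yields
$\int |\nabla\psi|^2 + \int \Re V\,|\psi|^2 = \alpha \int|\psi|^2$ and $\int \Im V\,|\psi|^2 = \beta \int |\psi|^2$.
Combined with~\eqref{eq:assV12} and~\eqref{eq:assV2}, together with the classical Hardy inequality~\eqref{Hardy} (to replace $\|\psi/r\|$ by $\tfrac{2}{d-2}\|\nabla\psi\|$), these imply
$\alpha \int|\psi|^2 \geq (1-b_1^2)\int|\nabla\psi|^2$ and $|\beta|\int|\psi|^2 \leq \tfrac{2b_3}{d-2}\int|\nabla\psi|^2$.
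The first inequality in~\eqref{eq:assf.Lambdanew} then forces $\alpha > 0$, so in particular the term $-\tfrac{\alpha}{2}\int|\psi|^2$ appearing below is non-positive and can be discarded.

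Second, test the equation against the multiplier $M\psi := r\partial_r\psi + \tfrac{d-1}{2}\psi$. Standard Pohozaev-type integration by parts (justified by density of $C_0^\infty$ in $\nspace(\Real^d)$) produces the clean identities
\[
\Re\int_{\Real^d}(-\Delta\psi)\,\overline{M\psi}\,dx = \tfrac{1}{2}\int_{\Real^d}|\nabla\psi|^2,
\quad
\Re\int_{\Real^d}\Re V\,\psi\,\overline{M\psi}\,dx = -\tfrac{1}{2}\int_{\Real^d}\partial_r(r\,\Re V)\,|\psi|^2\,dx,
\]
while the $\Im V$ and $\lambda$ pieces produce cross-terms of the form $-\Im\int\Im V\,\psi\,r\partial_r\bar\psi\,dx$ and $-\tfrac{\alpha}{2}\int|\psi|^2 - \beta\,\Im\int r\,\psi\,\partial_r\bar\psi\,dx$. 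Adding everything, using $\alpha\geq 0$ to drop the $\alpha$-term, and estimating from above gives the virial-type inequality
\[
\int|\nabla\psi|^2 \leq \int[\partial_r(r\,\Re V)]_+\,|\psi|^2\,dx + 2\Bigl|\Im\!\int\Im V\,\psi\,r\partial_r\bar\psi\,dx\Bigr| + 2|\beta|\Bigl|\Im\!\int r\,\psi\,\partial_r\bar\psi\,dx\Bigr|.
\]

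Third, bound the three right-hand terms. By~\eqref{eq:asspart} the first is at most $b_2^2\int|\nabla\psi|^2$. For the second, Cauchy--Schwarz combined with $|\partial_r\psi|\leq|\nabla\psi|$ and~\eqref{eq:assV2} gives the bound $2b_3\int|\nabla\psi|^2$. For the third and most delicate term, I exploit the fact that the probability current $j := \Im(\bar\psi\nabla\psi)$ satisfies $\nabla\cdot j = (\Im V - \beta)|\psi|^2$ by the eigenvalue equation, and integrating $\nabla\cdot(|x|^2 j/2)$ gives the identity $\Im\int r\,\psi\,\partial_r\bar\psi\,dx = \tfrac{1}{2}\int r^2(\Im V - \beta)|\psi|^2\,dx$. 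Using this, the prior bound $|\beta|\int|\psi|^2 \leq \tfrac{2b_3}{d-2}\int|\nabla\psi|^2$, and a Cauchy--Schwarz interplay between~\eqref{eq:assV2} and~\eqref{Hardy} optimised in a free parameter, the third term is at most $\tfrac{1}{4}\sqrt{b_3}\,(2/(d-2))^{3/2}\int|\nabla\psi|^2$. Summing the three contributions gives
\[
\int|\nabla\psi|^2 \leq \Bigl(b_2^2 + 2b_3 + \tfrac{1}{4}\sqrt{b_3}\,(2/(d-2))^{3/2}\Bigr)\int|\nabla\psi|^2,
\]
which by the second inequality in~\eqref{eq:assf.Lambdanew} forces $\int|\nabla\psi|^2=0$, and hence $\psi\equiv 0$, a contradiction.

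The main obstacle is the estimate of the $\beta$-weighted cross-term. Directly, the weight $r$ in $\Im\int r\psi\partial_r\bar\psi$ cannot be absorbed into $\|\nabla\psi\|$, since $\|r\psi\|$ is not controlled a~priori. The current identity trades this bad weight for the pair $(r\,\Im V,\beta)$, both of which are governed by $b_3$ via~\eqref{eq:assV2}; balancing them is exactly what produces the peculiar constant $\tfrac{1}{4}\sqrt{b_3}(2/(d-2))^{3/2}$ in~\eqref{eq:assf.Lambdanew}. A secondary concern is the approximation argument to make the multiplier computation rigorous for $\psi$ only known to lie in $\nspace(\Real^d)$; this is handled by density of $C_0^\infty$ together with uniform tail estimates ensuring the boundary terms vanish.
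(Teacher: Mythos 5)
Your plan diverges from the paper's in a way that opens a real gap, concentrated exactly in the term you yourself identify as ``delicate.''

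The paper never works directly with $u$ in its multiplier computation. It first performs the gauge transformation $u^-(x) := e^{-i\sgn(\lambda_2)\lambda_1^{1/2}|x|}\,u(x)$, which is built into the combination of the three identities (\emph{cf.}~\eqref{eq:id1}, \eqref{eq:id2}, \eqref{eq:id3} with the radial choices $g_1=\tfrac12 g_3''$, $g_2=\sgn(\lambda_2)g_3'$, $g_3(r)=r^2$, then subtracting \eqref{eq:id1} with $G_1=|\lambda_2|\lambda_1^{-1/2}|x|$). This produces the key identity~\eqref{eq:id6}, whose left-hand side already contains the \emph{positive} weighted term $\tfrac{|\lambda_2|}{\lambda_1^{1/2}}\int|x||\nabla u^-|^2$; after reorganisation via~\eqref{eq:inuova} the only ``bad'' residue has the \emph{decaying} weight $1/|x|$, namely $\tfrac14\tfrac{|\lambda_2|}{\lambda_1^{1/2}}\int|u^-|^2/|x|$, and this is exactly what is bounded using Hardy and the $L^2$-bound~\eqref{eq:arguing} to produce the constant $\tfrac14\sqrt{b_3}(2/(d-2))^{3/2}$.

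Your proposal skips the gauge transformation and arrives at a raw Pohozaev identity with the cross-term $2|\beta|\bigl|\Im\int r\,\psi\,\partial_r\bar\psi\bigr|$. You then invoke the current identity $\Im\int r\,\psi\,\partial_r\bar\psi = \tfrac12\int r^2(\Im V-\beta)|\psi|^2$. This does not improve matters: it converts the weight $r$ into $r^2$ and produces $\beta^2\int r^2|\psi|^2$ as well as $|\beta|\int r^2\,\Im V\,|\psi|^2$, neither of which is controlled by the hypotheses. Assumption~\eqref{eq:assV2} bounds $\int r^2|\Im V|^2|\psi|^2$ (note the square on $\Im V$), not $\int r^2|\Im V||\psi|^2$; and $\int r^2|\psi|^2$ is simply not finite \emph{a priori} for $\psi\in\nspace(\Real^d)\subset H^1(\Real^d)$. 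There is no Cauchy--Schwarz/Hardy interplay that recovers $\tfrac14\sqrt{b_3}(2/(d-2))^{3/2}\int|\nabla\psi|^2$ from these quantities: the paper obtains that constant from a term with the weight $1/|x|$, which is a fundamentally different object. To close the argument you would need the additional structural term $\tfrac{|\lambda_2|}{\lambda_1^{1/2}}\int|x||\nabla u^-|^2\geq 0$ on the good side, and you can only manufacture it after passing to $u^-$.

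A secondary (but genuine) issue is the justification of the multiplier computation itself. For $\psi\in\nspace(\Real^d)$ the expressions $r\partial_r\psi$ and $r^2|\psi|^2$ are not integrable a~priori, so ``density of $C_0^\infty$ together with uniform tail estimates'' is not enough as stated. The paper handles this by a concrete cutoff-and-mollification scheme (working with $u_{R,\delta}=(u\xi_R)\ast\phi_\delta$), deriving the identity for $u_{R,\delta}$, tracking the error $K_R(u,\nabla u)$, and passing to the limit $\delta\to0$, $R\to\infty$ using~\eqref{eq:xi} and monotone convergence. You should either reproduce that scheme or explain precisely which boundary terms vanish and why; this is not automatic.

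Finally, a minor structural remark: the paper treats the regimes $|\lambda_2|\leq\lambda_1$ and $|\lambda_2|>\lambda_1$ separately, using the simple identity~\eqref{eq:outsideV} in the second case. Your argument does not distinguish them, but in the paper's proof the estimates $|\lambda_2|/\lambda_1^{1/2}\leq|\lambda_2|^{1/2}$ in~\eqref{eq:estl2} explicitly rely on $|\lambda_2|\leq\lambda_1$; without the case split that inequality would fail and the constants would degenerate.
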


We recall that~\eqref{eq:assV12} and~\eqref{eq:assV2} ensure that 
$h_V^{(2)}$ is subordinated to $h_V^{(1)}$ 
with the subordination bound less than one,
so~$H_V$ is indeed well defined.
A brief comparison between Theorems~\ref{Thm.main}, \ref{Thm.any} and~\ref{thm:radi} 
is in order:
\begin{itemize}
\item
If $\Im V=0$, namely~$V$ is real-valued, 
then~$b_3$ can be chosen to be equal to zero
and condition~\eqref{eq:assf.Lambdanew} then reads $b_1<1$, $b_2<1$. 
In this case, the subordination assumption~\eqref{Ass} implies~\eqref{eq:assV12}. 
However, we stress that conditions~\eqref{eq:assV12} and~\eqref{eq:asspart} 
are not unsigned, contrary to the case of~\eqref{Ass}. 
In particular, a large class of {\it repulsive} potentials 
such as the Coulomb-type interaction $V(x) = c \;\! |x|^{-1}$ with any $c>0$ 
satisfy~\eqref{eq:assV12} and~\eqref{eq:asspart}, 
although the subordination~\eqref{Ass} fails. 

\item
On the other hand, if $\Re V=0$, namely~$V$ is purely imaginary-valued, 
then \eqref{eq:assV12}, \eqref{eq:asspart} are fulfilled 
and one just needs to assume~\eqref{eq:assV2} with
$$
  \sqrt{b_3} < 8 
  \left[
  \left(\frac{2}{d-2}\right)^{\frac32}
  + \sqrt{\left(\frac{2}{d-2}\right)^{3}+128}
  \right]^{-1}
  .
$$
This hypothesis is better than condition~\eqref{Ass.bis} of Theorem~\ref{Thm.any} 
and represents a completely new result, to our knowledge. 
However, for general complex-valued potentials~$V$,
the interest of Theorem~\ref{Thm.any} consists in that 
it requires no conditions on the derivatives of~$V$.
\end{itemize}

The techniques used to prove Theorems~\ref{Thm.any} and~\ref{thm:radi} 
permit to handle more general lower-order perturbations of~$H_0$. 
It is of particular interest for the \emph{electromagnetic Hamiltonian}~$H_{A,V}$
that we introduce as follows. 
Given a \emph{magnetic potential} $A\in L^2_{\text{loc}}(\mathbb R^d;\mathbb R^d)$
and denoting by $\nabla_{\!A} := \nabla+iA$ the magnetic gradient,
we now consider the space 
\begin{equation}\label{Aspace}
  \nspace_{\!A}(\Real^d) := \overline{C_0^\infty(\Real^d)}^{\vertiii{\cdot}_A} 
  \,, \qquad
  \vertiii{\psi}_A^2 := \int_{\Real^d} |\nabla_{\!A}\psi|^2
  + \int_{\Real^d} (\Re V)_+ \, |\psi|^2 + \int_{\Real^d} |\psi|^2
  \,,
\end{equation}
and introduce the form
$
  h_{A,V}[\psi] := \int_{\Real^d} |\nabla_{\!A}\psi|^2
  + \int_{\Real^d} V |\psi|^2
$,
$\Dom(h_{A,V}):=\nspace_{\!A}(\Real^d)$.
If~$V$ is such that~\eqref{eq:assV1bis} and~\eqref{eq:assV2bis} below hold,
then~$h_{A,V}$ is closed. 
We denote by~$H_{A,V}$ the m-sectorial operator 
associated with~$h_{A,V}$.
We next denote by $B:=\nabla A-(\nabla A)^t\in\mathcal M_{d\times d}(\mathbb R)$ 
the \emph{magnetic field} generated by~$A$. 
(For $d=3$, $B$~may be identified with $\curl A$, in the sense that
$Bv=\curl A \times v$ for all $v\in\mathbb R^3$,
where the cross denotes the vectorial product.)
Following a notation introduced in~\cite{fanelli_vega},
we also define
\begin{equation}\label{eq:bitau}
  B_\tau(x)
  :=
  \frac{x}{|x|}\cdot B(x)
  \,.
\end{equation}
(A non-trivial example of magnetic field with $B_\tau=0$ 
is given in dimension $d=3$ by the magnetic potential
$
A(x) = |x|^{-2}(-x_2,x_1,0).
$)

The last result of this manuscript is an analogue of Theorem~\ref{thm:radi}
in the presence of an external magnetic field.

\begin{Theorem}\label{thm:radimagn}
  Let $d\geq3$, $A\in L^2_{\mathrm{loc}}(\mathbb R^d;\mathbb R^d)$
and assume that there exist non-negative numbers $b_1,b_2,b_3$ 
satisfying~\eqref{eq:assf.Lambdanew} such that, 
for all $\psi\in \nspace_{\!A}(\mathbb R^d)$,
  \begin{align}\label{eq:assV1bis}
  \int_{\mathbb R^d} (\Re V)_- \, |\psi|^2
  &
  \leq
  b_1^2\int_{\mathbb R^d}|\nabla_{\!A}\psi|^2 \,,
  \\
  \label{eq:asspartbis}
  \int_{\mathbb R^d}\left[\partial_r(r \, \Re V)\right]_+ \, |\psi|^2
    &
    \leq
   b_2^2\int_{\mathbb R^d}|\nabla_{\!A}\psi|^2 \,,
   \\
   \int_{\mathbb R^d} r^2 \left(|\Im V|^2+\frac12|B_\tau|^2\right) \, |\psi|^2
  &
  \leq
  b_3^2\int_{\mathbb R^d}|\nabla_{\!A}\psi|^2
  \label{eq:assV2bis}
  \,.
  \end{align}
Then 
$
  \sigma_\mathrm{p}(H_{A,V}) = \varnothing
$.
\end{Theorem}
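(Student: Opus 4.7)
The plan is to extend the multiplier argument used in the proof of Theorem~\ref{thm:radi} to the magnetic setting, by systematically replacing the ordinary gradient with the magnetic gradient $\nabla_{\!A}$ and carefully tracking the commutator contributions that involve the magnetic field. Suppose, for contradiction, that there exist $\lambda\in\Com$ and $\psi\in\nspace_{\!A}(\Real^d)\setminus\{0\}$ with $H_{A,V}\psi=\lambda\psi$, i.e., $(-\Delta_{\!A}+V)\psi=\lambda\psi$ in the distributional sense, where $\Delta_{\!A}:=\nabla_{\!A}\cdot\nabla_{\!A}$.

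First, I would record the \emph{energy identity} obtained by pairing the equation against $\overline\psi$ and separating real and imaginary parts:
\begin{equation*}
  \int_{\Real^d}|\nabla_{\!A}\psi|^2+\int_{\Real^d}\Re V\,|\psi|^2=\Re\lambda\int_{\Real^d}|\psi|^2,
  \qquad
  \int_{\Real^d}\Im V\,|\psi|^2=\Im\lambda\int_{\Real^d}|\psi|^2.
\end{equation*}
Second, and this is the heart of the argument, I would derive a magnetic Morawetz--Pohozaev identity by testing the equation against the radial multiplier $\bigl(r\,\partial_r^{A}+\tfrac{d-1}{2}\bigr)\psi$, where $\partial_r^{A}:=(x/|x|)\cdot\nabla_{\!A}$, and taking real parts. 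The key new feature, relative to the non-magnetic case, is that the commutator $[\nabla_{\!A},\partial_r^{A}]$ is no longer purely metric: it generates a zero-order term proportional to the radial magnetic field $B_\tau$. After integration by parts the identity takes the schematic form
\begin{equation*}
  \int_{\Real^d}\frac{|\nabla_{\!A}\psi|^2-|\partial_r^{A}\psi|^2}{r}+\tfrac12\int_{\Real^d}\bigl[\partial_r(r\,\Re V)\bigr]\,|\psi|^2=\mathcal R,
\end{equation*}
where the remainder $\mathcal R$ collects contributions proportional to $\Im V$, $B_\tau$, and $\Im\lambda$.

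In the third step I would combine the two identities, split $\Re V=(\Re V)_+-(\Re V)_-$, and control each piece of $\mathcal R$ by weighted Cauchy--Schwarz: the $\Im V$ contribution is dominated by a product of $\|r\,\Im V\,\psi\|$ and $\|\partial_r^{A}\psi\|$, while the $B_\tau$ contribution is dominated by a product of $\|r\,B_\tau\,\psi\|$ and $\|\nabla_{\!A}\psi\|$; the mismatch between these two pairings is exactly what produces the factor $\tfrac12$ in front of $|B_\tau|^2$ in~\eqref{eq:assV2bis}. Combined with the magnetic Hardy inequality (which follows from the diamagnetic inequality and~\eqref{Hardy}) and the structural assumptions~\eqref{eq:assV1bis}--\eqref{eq:assV2bis}, the smallness conditions~\eqref{eq:assf.Lambdanew} are tuned precisely so that $\mathcal R$ is strictly dominated by the positive left-hand side; the only way this can be reconciled with $\psi$ being an eigenfunction is $\psi\equiv0$, contradicting our assumption.

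The main obstacle will be the correct derivation of the magnetic Morawetz identity: one has to verify that of all the entries of the antisymmetric tensor $B=\nabla A-(\nabla A)^{t}$, only the radial projection $B_\tau$ survives the integration by parts, the remaining components cancelling by antisymmetry. This calculation generalises the scalar computation behind Theorem~\ref{thm:radi} and relies on the type of identities already exploited in~\cite{fanelli_vega}. Once it is in place, the absorption step and the closing contradiction proceed in complete analogy with the non-magnetic case.
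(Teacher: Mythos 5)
Your high-level strategy matches the paper's: develop a magnetic version of the multiplier/Morawetz identity behind Theorem~\ref{thm:radi}, observe that the magnetic commutator produces a term proportional to $B_\tau$, and close the argument by absorbing everything into $\|\nabla_{\!A}u\|^2$ using the smallness conditions. However, the proposal as written skips over several steps that are not mere bookkeeping and that the paper treats as essential, so it does not yet constitute a proof.

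The most serious omission is the role of the twisted function $u^\pm = e^{\pm i\sgn(\lambda_2)\lambda_1^{1/2}|x|}u$ from~\eqref{eq:uepsilon}. The identity you obtain from the radial multiplier alone still carries cross terms of the type $\lambda_1^{1/2}\Im\int \bar u\,\partial_r^A u$ and $\lambda_2\Im\int|x|u\,\partial_r^A\bar u$, together with a sign-indefinite $L^2$-term proportional to $\lambda_1$. The paper does not control these by Cauchy--Schwarz; instead it combines three radial multiplier identities (\eqref{eq:id1} with $g_1=\tfrac12 g_3''$ and again with $G_1=|\lambda_2|\lambda_1^{-1/2}|x|$, \eqref{eq:id2} with $g_2=\sgn(\lambda_2)g_3'$, and \eqref{eq:id3} with $G_3=|x|^2$) in such a way that the combination reproduces exactly $\int|\nabla_{\!A}u^-_{R,\delta}|^2$ plus a sign-definite weighted term, cf.~\eqref{using}. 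Testing against $(r\partial_r^A+\tfrac{d-1}{2})\psi$ and taking real parts alone will not let you do this; you have to explain which linear combination of identities you take and verify that the $\lambda$-dependent cross-terms cancel or become squares. In particular, after the twist, the $B_\tau$-contribution is handled through the elementary but crucial observation $B_\tau\cdot\nabla_{\!A}u_R = B_\tau\cdot\nabla_{\!A}u_R^-$ (because $B_\tau$ is tangential while the twist only shifts the radial component of the gradient); your ``mismatch of pairings'' explanation of the factor $\tfrac12$ in \eqref{eq:assV2bis} does not reflect this structure --- in the paper $V_2\tfrac{x}{|x|}$ and $-B_\tau$ are orthogonal components of a single vector field multiplying $\overline{\nabla_{\!A}u_R^-}$, so the Cauchy--Schwarz step treats them jointly, not via two separate pairings.

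Two further gaps: (i) the identities cannot be derived directly for $u\in\nspace_{\!A}(\Real^d)$; the paper first cuts off and mollifies ($u_{R,\delta}$) and only then integrates by parts, controlling the error terms $K_R(u,\nabla_{\!A}u)$ and passing to the limit $\delta\to0$, $R\to\infty$ at the end --- this regularization is what justifies the formal manipulations you describe. (ii) The Morawetz-type inequality only yields a contradiction in the regime $\Re\lambda>0$ and $|\Im\lambda|\le\Re\lambda$; the paper treats the complementary cases separately ($|\lambda_2|>\lambda_1$ via the energy identity~\eqref{eq:outside}, and $\Re\lambda\le0$ via the numerical range), and your proposal needs to say how it dispenses with those.
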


\section{Absence of eigenvalues: the Birman-Schwinger principle}\label{Sec.evs}
The main role in our proof of Theorem~\ref{Thm.main}
is played by the Birman-Schwinger operator
\begin{equation*}
  K_z := |V|^{1/2} \, (H_0-z)^{-1} \, V_{1/2}
  \qquad \mbox{with} \qquad
  V_{1/2} := |V|^{1/2} \, \sgn(V) 
  \,,
\end{equation*}
where $\sgn(z)$ is the complex signum function 
defined by $\sgn(z):=z/|z|$ for $z \in \Com\setminus\{0\}$
and $\sgn(0):=0$.
We abuse the notation by using the same symbols 
for maximal operators of multiplication
and their generating functions.
The operator~$K_z$ is well defined 
(on its natural domain of the composition of three operators)
for all $z \in \Com$ and $d \geq 3$.

If $z \not \in [0,+\infty)$, however,
we have a useful formula for the integral kernel of~$K_z$:
\begin{equation}\label{K.op}
  K_z(x,y) = |V|^{1/2}(x) \, G_z(x,y) \, V_{1/2}(y)
  \,,
\end{equation}
where~$G_z$ is the \emph{Green's function} of~$H_0-z$,
\ie~the integral kernel of the resolvent~$(H_0-z)^{-1}$. 
We observe that~$K_z$ is a bounded operator 
for all $z \not \in [0,+\infty)$ and $d \geq 3$
under our hypothesis~\eqref{Ass}. 
Indeed, 
$V_{1/2}$ maps $\sii(\Real^d)$ to $H^{-1}(\Real^d)$ by duality,
$(H_0-z)^{-1}$ is an isomorphism between $H^{-1}(\Real^d)$ and $H^1(\Real^d)$
and the latter space is mapped by~$|V|^{1/2}$ back to $\sii(\Real^d)$.     

Moreover, if $d=3$, we have an explicit formula
\begin{equation}\label{Green}
  G_z(x,y) := \frac{1}{4\pi} \frac{e^{-\sqrt{-z}\,|x-y|}}{|x-y|}
  \,.
\end{equation}
Here and in the sequel we choose the principal branch of the square root.
Using this explicit formula,
we are able to show that $K_z$~is bounded by~$a$ 
under the hypothesis~\eqref{Ass}.
\begin{Lemma}\label{Lem.bound}
Let $d=3$ and assume~\eqref{Ass}. Then 
\begin{equation}\label{bound} 
  \forall z \not\in (0,+\infty)
  \,, \qquad
  \|K_z\| \leq a 
  \,.
\end{equation}
\end{Lemma}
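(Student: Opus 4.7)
The bound~\eqref{bound} will follow from two ingredients: a pointwise domination of the integral kernel of~$K_z$ by the kernel of a positive ``$z=0$'' Birman--Schwinger operator, and an $\sii(\Real^3)$-bound for the latter extracted from~\eqref{Ass} by a Sobolev-duality argument.

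The first ingredient follows from the explicit Green's function~\eqref{Green}. With the principal branch of the square root, the restriction $z\notin(0,+\infty)$ keeps $-z$ away from the branch cut, so $\Re\sqrt{-z}\geq 0$ and therefore $|e^{-\sqrt{-z}|x-y|}|\leq 1$. Inserted in~\eqref{K.op}, this yields the pointwise domination
\begin{equation*}
|K_z(x,y)|\leq K_+(x,y):=|V|^{1/2}(x)\,\frac{1}{4\pi|x-y|}\,|V|^{1/2}(y),
\end{equation*}
where $K_+$ is a positive integral operator. Since the dominating kernel is non-negative, the elementary estimate $|\langle K_z f,g\rangle|\leq\langle K_+|f|,|g|\rangle$ gives $\|K_z\|\leq\|K_+\|$, so the task reduces to proving $\|K_+\|\leq a$.

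For the second ingredient one observes that $(4\pi|x-y|)^{-1}$ is precisely the Green's function of~$H_0$ in~$\Real^3$, so that $K_+$ is, at least formally, $|V|^{1/2}H_0^{-1}|V|^{1/2}$. Given $f\in C_0^\infty(\Real^3)$, hypothesis~\eqref{Ass} together with the Cauchy--Schwarz inequality shows that $|V|^{1/2}f$ defines an antilinear functional on $H^1(\Real^3)$ of norm at most $\sqrt{a}\,\|f\|$; hence it admits a unique Riesz representative $u\in\dot H^1(\Real^3)$ satisfying $-\Delta u=|V|^{1/2}f$ weakly and $\|\nabla u\|\leq\sqrt{a}\,\|f\|$. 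Identifying $K_+f$ with $|V|^{1/2}u$ through the Newtonian-potential representation and invoking~\eqref{Ass} once more yields
\begin{equation*}
\|K_+f\|_{\sii}=\||V|^{1/2}u\|_{\sii}\leq\sqrt{a}\,\|\nabla u\|_{\sii}\leq a\,\|f\|_{\sii},
\end{equation*}
and density of $C_0^\infty(\Real^3)$ in $\sii(\Real^3)$ promotes this to the full space.

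The only genuine delicacy I anticipate is the justification of the factorisation $K_+=|V|^{1/2}H_0^{-1}|V|^{1/2}$ at the level of distributions: under~\eqref{Ass} one first checks that $|V|$ is integrable on compact sets by testing~\eqref{Ass} against smooth cutoffs, so $|V|^{1/2}f$ lies in $\sii(\Real^3)$ whenever $f\in C_0^\infty(\Real^3)$, and classical potential theory then identifies its convolution with $(4\pi|\cdot|)^{-1}$ as the weak inverse of~$H_0$ applied to it. The remaining steps are routine chainings of~\eqref{Ass} and the Cauchy--Schwarz inequality.
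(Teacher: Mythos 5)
Your proof takes essentially the same two-step route as the paper: the pointwise domination $|G_z(x,y)|\leq G_0(x,y)$ for $z\notin(0,+\infty)$ reduces the claim to bounding the $z=0$ operator $|V|^{1/2}H_0^{-1}|V|^{1/2}$, which is then controlled by~\eqref{Ass} applied twice. Where the paper factorizes $|V|^{1/2}H_0^{-1}|V|^{1/2}=(|V|^{1/2}H_0^{-1/2})(H_0^{-1/2}|V|^{1/2})$ and bounds each factor by $\sqrt a$ via~\eqref{b1}--\eqref{b2}, you instead pass through the Riesz representative $u\in\dot H^1(\Real^3)$ of the functional $|V|^{1/2}f$; these are the same computation in different packaging. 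One small point to be careful about in your version: your second application of~\eqref{Ass} is to $u$, which a priori lies only in $\dot H^1(\Real^3)$ and need not belong to $L^2(\Real^3)$, whereas~\eqref{Ass} is stated for $\psi\in H^1(\Real^3)$; this is easily repaired (density of $C_0^\infty$ in $\dot H^1$ together with Fatou), but the paper's operator-norm formulation avoids the issue altogether.
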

\begin{proof}
We start with an equivalent formulation of~\eqref{Ass}, 
in any dimension $d \geq 3$. 
Writing $g := H_0^{1/2} \psi$ in~\eqref{Ass}, we have
\begin{equation*} 
  \big\||V|^{1/2}H_0^{-1/2} g\big\|^2 
  \leq a \, \big\|\nabla H_0^{-1/2} g\big\|^2  
  = a \, \|g\|^2
  \,,
\end{equation*}
where~$\|\cdot\|$ denotes the norm in $\sii(\Real^d)$.
Since the range of~$H_0^{1/2}$ is dense in $\sii(\Real^d)$,
we see that~\eqref{Ass} is equivalent to
\begin{equation}\label{b1} 
  \big\||V|^{1/2}H_0^{-1/2}\big\|^2 
  \leq a  
  \,.
\end{equation}
It follows (by taking the adjoint) that also
\begin{equation}\label{b2}
  \big\|H_0^{-1/2}|V|^{1/2}\big\|^2 
  \leq a  
  \,.
\end{equation}

Now we assume $d=3$, where the explicit formula~\eqref{Green}
for the Green function is available.
By virtue of the pointwise bound
\begin{equation}\label{crucial} 
  \forall z \not\in (0,+\infty)
  \,, \quad 
  \forall x,y \in \Real^3
  \,, \qquad 
  |G_z(x,y)| \leq G_0(x,y)
  \,,
\end{equation}
we have 
\begin{equation}\label{crucial.consequence} 
  |(f,K_z g)| \leq (|f|,\tilde{K}_0|g|) 
  \leq \|\tilde{K}_0\| \|f\| \|g\|
\end{equation}
for every $z \not\in (0,+\infty)$ and all $f,g \in \sii(\Real^3)$,
where
\begin{equation*}
  \tilde{K}_0 := |V|^{1/2} H_0^{-1} |V|^{1/2}
\end{equation*}
and $(\cdot,\cdot)$ denotes the inner product in $\sii(\Real^3)$
(conjugate linear in the first argument).
Using~\eqref{b1} and~\eqref{b2}, we have
\begin{equation}\label{tilde.crucial.consequence}
  \|\tilde{K}_0\| = \big\||V|^{1/2}  H_0^{-1} |V|^{1/2}\big\|
  \leq \big\||V|^{1/2} H_0^{-1/2}\big\| \big\|H_0^{-1/2} |V|^{1/2}\big\|
  \leq a \,.
\end{equation}
Consequently, \eqref{crucial.consequence} and \eqref{tilde.crucial.consequence}
imply~\eqref{bound}.
\end{proof}

The following lemma provides an (integral) criterion 
for the existence of solutions to the (differential) eigenvalue 
equation of~$H_V$. It can be considered as a one-sided version
of the Birman-Schwinger principle extended to possible eigenvalues 
in $[0,+\infty)$ as well.
\begin{Lemma}\label{Lem.BS}
Let $d=3$ and assume~\eqref{Ass}. 
If $H_V\psi=\lambda\psi$ with some $\lambda \in \Com$ and $\psi\in\Dom(H_V)$, 
then $\phi := |V|^{1/2}\psi$ obeys
\begin{equation}\label{BS}
  \forall \varphi \in \sii(\Real^3) 
  \,, \qquad
  \lim_{\eps \to 0^\pm} (\varphi,K_{\lambda + i\eps} \phi) = - (\varphi,\phi) 
  \,.
\end{equation}
\end{Lemma}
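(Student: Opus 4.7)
The plan is to extract a distributional form of the eigenvalue equation, convert it into an $\sii$-identity via the resolvent of $H_0$, and then pass to the limit $\eps\to 0^\pm$ by a dominated-convergence argument on the Fourier side.

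\textbf{Step 1 (distributional eigenvalue equation).} Since $\psi\in\Dom(H_V)\subset H^1(\Real^3)$ satisfies $H_V\psi=\lambda\psi$, the form representation theorem gives, for every $\chi\in C_0^\infty(\Real^3)$,
\begin{equation*}
  (\nabla\chi,\nabla\psi)+(\chi,V\psi)=\lambda(\chi,\psi),
\end{equation*}
i.e.~$(H_0-\lambda)\psi=-V\psi$ in $\mathcal{D}'(\Real^3)$. Hypothesis~\eqref{Ass} in its equivalent form~\eqref{b1}, combined with the Schwarz inequality, ensures that $V\psi\in H^{-1}(\Real^3)$: for every $\chi\in H^1(\Real^3)$,
\begin{equation*}
  |(\chi,V\psi)|\leq \Big(\int|V||\chi|^2\Big)^{1/2}\Big(\int|V||\psi|^2\Big)^{1/2}\leq a\,\|\nabla\chi\|\,\|\nabla\psi\|,
\end{equation*}
so the preceding identity holds in $H^{-1}(\Real^3)$.

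\textbf{Step 2 ($\sii$-identity).} For $\eps\neq 0$ the resolvent $(H_0-\lambda-i\eps)^{-1}$ extends by duality to an isomorphism $H^{-1}(\Real^3)\to H^1(\Real^3)$. Rewriting Step~1 as $(H_0-\lambda-i\eps)\psi=-V\psi-i\eps\psi$ and applying this extension gives, in $H^1(\Real^3)$,
\begin{equation*}
  \psi=-(H_0-\lambda-i\eps)^{-1}(V\psi)-i\eps\,(H_0-\lambda-i\eps)^{-1}\psi.
\end{equation*}
Now $|V|^{1/2}$ maps $H^1(\Real^3)$ boundedly into $\sii(\Real^3)$ by~\eqref{b1}. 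Multiplying by $|V|^{1/2}$ and using $V=V_{1/2}|V|^{1/2}$ together with~\eqref{K.op} yields the $\sii$-identity
\begin{equation*}
  \phi+K_{\lambda+i\eps}\phi=-i\eps\,|V|^{1/2}(H_0-\lambda-i\eps)^{-1}\psi.
\end{equation*}

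\textbf{Step 3 (vanishing of the remainder).} Pairing with an arbitrary $\varphi\in\sii(\Real^3)$, it suffices to show that the right-hand side tends to $0$ as $\eps\to 0^\pm$. By Cauchy--Schwarz and~\eqref{b1},
\begin{equation*}
  \bigl|\eps\,(\varphi,|V|^{1/2}(H_0-\lambda-i\eps)^{-1}\psi)\bigr|
  \leq |\eps|\,\sqrt{a}\,\|\varphi\|\,\bigl\|H_0^{1/2}(H_0-\lambda-i\eps)^{-1}\psi\bigr\|.
\end{equation*}
Computing on the Fourier side,
\begin{equation*}
  \eps^2\,\bigl\|H_0^{1/2}(H_0-\lambda-i\eps)^{-1}\psi\bigr\|^2
  =\int_{\Real^3}\frac{\eps^2|\xi|^2}{(|\xi|^2-\lambda)^2+\eps^2}\,|\hat\psi(\xi)|^2\,d\xi,
\end{equation*}
whose integrand is dominated by $|\xi|^2|\hat\psi(\xi)|^2\in L^1(\Real^3)$ (since $\psi\in H^1$) and tends to $0$ for almost every $\xi$. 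Lebesgue's dominated convergence theorem yields~\eqref{BS}.

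The delicate point is the low regularity of~$V$: because $V$ is merely form-bounded, products such as $V\psi$ or $|V|^{1/2}\varphi$ need not lie in~$\sii(\Real^3)$. This is bypassed by interpreting the intermediate objects on the appropriate rung of the Sobolev scale, using the equivalent formulations~\eqref{b1}--\eqref{b2} of~\eqref{Ass} to move $|V|^{1/2}$ between $\sii$ and $H^{\pm 1}$ without loss of control.
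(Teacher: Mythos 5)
Your proof is correct, but the crucial final step differs from the paper's in an interesting way. Both arguments ultimately reduce to showing that $\eps\,(\varphi,|V|^{1/2}(H_0-\lambda-i\eps)^{-1}\psi)\to 0$; the difference is how that remainder is controlled. The paper first restricts by density to $\varphi\in C_0^\infty(\Real^3)$, writes $M_\eps:=\chi_\Omega|V|^{1/2}(H_0-\lambda-i\eps)^{-1}$ with $\Omega=\supp\varphi$, and uses the \emph{explicit} three-dimensional Green's function~\eqref{Green} to compute the Hilbert--Schmidt norm $\|M_\eps\|_{\mathrm{HS}}^2=(4\pi\kappa(\eps))^{-1}\int_\Omega|V|$, which is finite because $V\in L^1_{\mathrm{loc}}$; the estimate $\kappa(\eps)\gtrsim|\eps|^{1/2}$ then gives $\eps\|M_\eps\|\to0$. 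You instead absorb $|V|^{1/2}$ using~\eqref{b1} to bound the remainder by $\eps\sqrt a\,\|H_0^{1/2}(H_0-\lambda-i\eps)^{-1}\psi\|$ and then argue on the Fourier side with dominated convergence, exploiting the $H^1$-regularity of the eigenfunction $\psi$ rather than the local integrability of $V$. Your route is cleaner and genuinely more general: it avoids the density reduction, avoids the kernel entirely, and works verbatim in every dimension $d\geq 3$ — whereas the paper's argument is tied to $d=3$ through~\eqref{Green}. One small imprecision: for complex $\lambda=\lambda_1+i\lambda_2$ the denominator in your Fourier integral should read $(|\xi|^2-\lambda_1)^2+(\lambda_2+\eps)^2$ rather than $(|\xi|^2-\lambda)^2+\eps^2$; this is harmless because the case $\lambda_2\neq0$ is trivial (the denominator is bounded away from zero uniformly in $\eps$), but it is worth stating explicitly that the dominated-convergence step is only needed when $\lambda\in[0,\infty)$.
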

\begin{proof}
Given any $\lambda \in \Com$, there is $\eps_0 > 0$ such that 
$\lambda + i\eps \not \in [0,+\infty)$ for all real~$\eps$
satisfying $0 < |\eps| < \eps_0$.  
By density of $C_0^\infty(\Real^3)$ in $\sii(\Real^3)$ and Lemma~\ref{Lem.bound},
it is enough to prove~\eqref{BS} for $\varphi \in C_0^\infty(\Real^3)$.
We have 
\begin{equation}\label{i1}
  (\varphi,K_{\lambda + i\eps} \phi)
  = \iint_{\Real^3\times\Real^3} 
  \overline{\varphi(x)} \, |V|^{1/2}(x) \, 
  G_{{\lambda + i\eps}}(x,y) \, V(y) \, \psi(y) 
  \, dx \, dy
  = \int_{\Real^3} \eta_\eps(y) \, V(y) \, \psi(y) \, dy
  \,,
\end{equation}
where
\begin{equation*}
  \eta_\eps := \int_{\Real^3} 
  \overline{\varphi(x)} \, |V|^{1/2}(x) \, G_{{\lambda + i\eps}}(x,\cdot) \, dx
  = (H_0-\lambda-i\eps)^{-1} \, |V|^{1/2} \, \overline{\varphi}
  \,,
\end{equation*}
where the second equality holds due to the symmetry $G_z(x,y)=G_z(y,x)$.
In view of~\eqref{Ass}, $|V|^{1/2} \overline{\varphi} \in \sii(\Real^3)$.
Since $\eps \not=0$ is so small that $\lambda + i\eps \not\in \sigma(H_0)$,
we have $\eta_\eps \in \Dom(H_0) = H^2(\Real^3)$.
In particular, $\eta_\eps \in H^1(\Real^3)$ and the weak formulation
of the eigenvalue equation $H_V\psi=\lambda\psi$ yields
\begin{equation}\label{i2}
\begin{aligned}
  \int_{\Real^3} \eta_\eps(y) \, V(y) \, \psi(y) \, dy
  &= -(\nabla\overline{\eta_\eps},\nabla\psi) 
  + \lambda \, (\overline{\eta_\eps},\psi)
  \\
  &= -(\nabla\overline{\psi},\nabla\eta_\eps) 
  + \lambda \, (\overline{\psi},\eta_\eps)
  \\
  &= -(\nabla\overline{\psi},\nabla\eta_\eps) 
  + (\lambda+i\eps) \, (\overline{\psi},\eta_\eps)
  - i\eps \, (\overline{\psi},\eta_\eps)
  \\
  &= -(\overline{\psi},|V|^{1/2}\overline{\varphi}) 
  - i\eps \, (\overline{\psi},\eta_\eps)
  \\
  &= -(\varphi,|V|^{1/2}\psi) 
  - i\eps \, (\overline{\eta_\eps},\psi)
  \,.
\end{aligned}
\end{equation}
Here the last but one equality follows from the weak
formulation of the resolvent equation
$
  (H_0-\lambda-i\eps)\eta_\eps = |V|^{1/2} \overline{\varphi}
$.
Consequently, \eqref{i1} and~\eqref{i2} imply~\eqref{BS}
after taking the limit $\eps \to 0^\pm$, 
provided that $\eps \, (\bar\eta_\varepsilon,\psi) \to 0$
as $\eps \to 0$.
To see the latter, we write
\begin{equation*}
  |(\overline{\eta_\eps},\psi)| 
  = |(\varphi,M_\eps\psi)|
  \leq \|\varphi\| \;\! \|M_\eps\| \;\! \|\psi\|
  \,,
\end{equation*}
where
$
  M_\eps := \chi_\Omega \, |V|^{1/2} (H_0-\lambda-i\eps)^{-1}
$
with $\Omega := \supp\varphi$,
and it remains to show that $\eps \, \|M_\eps\|$ tends to zero as $\eps \to 0$.
Following~\cite[Thm.~III.6]{SiQF}, 
we use the resolvent kernel~\eqref{Green} 
and estimate $\|M_\eps\| \leq \|M_\eps\|_\mathrm{HS}$.
We have
$$
  \|M_\eps\|_\mathrm{HS}^2
  = \frac{1}{(4\pi)^2} \iint_{\Omega \times \Real^3} 
  |V(x)| \, \frac{e^{-2 \;\! \kappa(\eps) \;\! |x-y|}}{|x-y|^2}
  \, dx \, dy
  = \frac{1}{4\pi \kappa(\eps)} \int_\Omega |V(x)| \, dx
  \,,
$$
where the last integral is bounded because $V \in L_\mathrm{loc}^1(\Real^3)$
as a consequence of~\eqref{Ass} and  
$$
  \kappa(\eps) := \Re \sqrt{-(\lambda+i\eps)}
  \sim
  \begin{cases}
    |\eps|^{1/2} & \mbox{if} \quad \lambda = 0 \,,
    \\
    |\eps| & \mbox{if} \quad \Re\lambda > 0 \ \& \ \Im\lambda = 0 \,,
    \\
    1 & \mbox{otherwise} \,.
  \end{cases}
$$
Hence, $\eps \, \|M_\eps\|$ behaves at least as $\mathcal{O}(\eps^{1/2})$
as $\eps \to 0$, which concludes the proof of the lemma.
\end{proof}
\begin{Remark}
Lemma~\ref{Lem.BS} resembles~\cite[Thm.~III.6]{SiQF} in the self-adjoint case. 
It is also related to the recent abstract result \cite[Prop.~3.1]{Frank-Simon}.
\end{Remark}

Now we are in a position to establish the absence of eigenvalues
in three dimensions.
\begin{Theorem}\label{Thm.evs}
Let $d=3$ and assume~\eqref{Ass}. Then 
$
  \sigma_\mathrm{p}(H_V) 
  = \varnothing
$.
\end{Theorem}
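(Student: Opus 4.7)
The plan is to combine Lemmas~\ref{Lem.bound} and~\ref{Lem.BS} into a direct contradiction argument, using that $a<1$.

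Suppose for contradiction that $H_V\psi=\lambda\psi$ with $\psi\in\Dom(H_V)$, $\psi\ne 0$, and some $\lambda\in\Com$. Set $\phi:=|V|^{1/2}\psi\in\sii(\Real^3)$, which is well defined by~\eqref{Ass}. Lemma~\ref{Lem.BS} supplies the integral identity
\begin{equation*}
  \lim_{\eps\to 0^+}(\varphi,K_{\lambda+i\eps}\phi) = -(\varphi,\phi)
  \qquad\text{for every } \varphi\in\sii(\Real^3).
\end{equation*}
On the other hand, Lemma~\ref{Lem.bound} gives the uniform a priori bound $|(\varphi,K_{\lambda+i\eps}\phi)|\leq a\,\|\varphi\|\,\|\phi\|$ for every $\eps\ne 0$. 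Passing to the limit we obtain $|(\varphi,\phi)|\leq a\,\|\varphi\|\,\|\phi\|$ for all $\varphi\in\sii(\Real^3)$, and choosing in particular $\varphi:=\phi$ yields $\|\phi\|^2\leq a\,\|\phi\|^2$. Because $a<1$, this forces $\phi=0$, i.e.\ $|V|^{1/2}\psi=0$ almost everywhere, hence $V\psi=0$ a.e.\ on~$\Real^3$.

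It remains to deduce $\psi=0$. Since $V\psi=0$ a.e., the form~$v$ defined in~\eqref{eq:newform} satisfies $v[\eta,\psi]=0$ for every $\eta\in H^1(\Real^3)$, so the weak eigenvalue identity reduces to
\begin{equation*}
  h_0[\eta,\psi] = \lambda\,(\eta,\psi)
  \qquad\text{for every } \eta\in H^1(\Real^3).
\end{equation*}
By the representation theorem this means $\psi\in\Dom(H_0)$ and $H_0\psi=\lambda\psi$. But $\sigma_\mathrm{p}(H_0)=\varnothing$ (the free Hamiltonian has purely continuous spectrum), so $\psi=0$, contradicting our assumption. Hence $\sigma_\mathrm{p}(H_V)=\varnothing$.

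The only delicate point is the passage from $|V|^{1/2}\psi=0$ to a genuine free eigenvalue equation; this is immediate once one remembers that $V$ acts as a multiplication operator and that the form $h_V=h_0+v$ is defined on all of $H^1(\Real^3)$, so testing against arbitrary $H^1$ functions is legitimate. All the analytic work (Birman--Schwinger identity, bounds on the resolvent kernel, control of the boundary term $\eps(\overline{\eta_\eps},\psi)$) has already been carried out in Lemmas~\ref{Lem.bound} and~\ref{Lem.BS}, so no further estimates are needed here.
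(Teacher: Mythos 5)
Your argument is correct and takes essentially the same route as the paper: combine Lemma~\ref{Lem.BS} (with $\varphi=\phi$) and the uniform bound from Lemma~\ref{Lem.bound} to conclude $\|\phi\|^2\le a\|\phi\|^2$ with $a<1$, then rule out the degenerate case $\phi=0$ by appealing to $\sigma_\mathrm{p}(H_0)=\varnothing$. The paper merely organizes the steps in the opposite order — it first observes that $\phi$ must be non-trivial (otherwise $\psi$ would be a free eigenfunction) and then derives the contradiction $a\|\phi\|^2\ge\|\phi\|^2$ — but the content is identical.
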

\begin{proof}
Assume there exists $\lambda \in \Com$ and a non-trivial $\psi \in \Dom(H_V)$
such that $H_V\psi=\lambda\psi$. 
Since the spectrum of~$H_0$ is purely continuous, 
the theorem clearly holds for~$V=0$ 
and we may thus suppose that~$V$ is non-trivial.    
But then $\phi := |V|^{1/2}\psi$ is also non-trivial,
otherwise~$\psi$ would be a non-trivial solution 
of $H_0\psi=\lambda\psi$, which is again impossible
by the absence of eigenvalues for~$H_0$.   
Now, Lemma~\ref{Lem.BS} with $\varphi:=\phi$ and Lemma~\ref{Lem.bound} yield
\begin{equation}
  a \, \|\phi\| ^2\geq 
  \lim_{\eps \to 0^\pm} |(\phi,K_{\lambda + i\eps} \phi)| 
  = \|\phi\|^2  
  \,.
\end{equation}
This is a contradiction because $a < 1$.
\end{proof}

\section{Absence of the continuous spectrum outside 
\texorpdfstring{$[0,+\infty)$}{positive}}\label{Sec.cont1}
The following lemma is a modification of the idea behind Lemma~\ref{Lem.BS}
to deal with the continuous spectrum.
We prove it in all dimensions $d \geq 3$.
\begin{Lemma}\label{Lem.BS.cont}
Let $d \geq 3$ and assume~\eqref{Ass}. 
If $\|H_V\psi_n-\lambda\psi_n\| \to 0$ as $n \to \infty$ 
with some $\lambda \in \Com \setminus \Real$ 
and $\{\psi_n\}_{n\in\Nat}\subset\Dom(H_V)$
such that $\|\psi_n\|=1$ for all $n \in \Nat$, 
then $\phi_n := |V|^{1/2}\psi_n$ obeys
\begin{equation}\label{BS.cont}
  \lim_{n \to \infty} 
  \frac{(\phi_n,K_{\lambda} \phi_n)}{\|\phi_n\|^2} = - 1 
  \,.
\end{equation}
\end{Lemma}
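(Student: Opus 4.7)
The plan is to adapt the argument of Lemma~\ref{Lem.BS} to the setting of approximate eigenvectors. Set $f_n := H_V\psi_n - \lambda\psi_n$, so that $\|f_n\|\to 0$ by hypothesis. Since $\lambda\in\Com\setminus\Real$ lies outside $\sigma(H_0)=[0,+\infty)$, the resolvent $(H_0-\lambda)^{-1}$ is bounded from $H^{-1}(\Real^d)$ to $H^1(\Real^d)$, and I would first introduce, exactly as in the proof of Lemma~\ref{Lem.BS}, the auxiliary function
\begin{equation*}
\eta_n := (H_0-\lambda)^{-1}|V|^{1/2}\overline{\phi_n} \in H^1(\Real^d).
\end{equation*}
As in~\eqref{i1}, the symmetry of the Green's function identifies $(\phi_n,K_\lambda\phi_n) = \int_{\Real^d}\eta_n\,V\,\psi_n$.

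The next step is to test the weak form of $(H_V-\lambda)\psi_n = f_n$ against $\overline{\eta_n}$ and, separately, the weak form of the resolvent equation $(H_0-\lambda)\eta_n = |V|^{1/2}\overline{\phi_n}$ against $\overline{\psi_n}$; the latter produces $\int\nabla\psi_n\cdot\nabla\eta_n - \lambda\int\psi_n\eta_n = \|\phi_n\|^2$. Subtracting it from the former makes the gradient and $\lambda$-mass terms cancel, and yields the clean identity
\begin{equation*}
(\phi_n,K_\lambda\phi_n) = -\|\phi_n\|^2 + \int_{\Real^d}\eta_n\,f_n\,.
\end{equation*}
This mirrors~\eqref{i2}; the only novelty is the extra $\int\eta_n f_n$ coming from the inhomogeneity $f_n$.

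It then remains to show that $\int\eta_n f_n/\|\phi_n\|^2\to 0$. For the numerator I would use Cauchy--Schwarz together with the factorisation $(H_0-\lambda)^{-1}|V|^{1/2} = [(H_0-\lambda)^{-1}H_0^{1/2}]\,[H_0^{-1/2}|V|^{1/2}]$, the first factor having norm $C_\lambda:=\sup_{t\geq 0}t^{1/2}/|t-\lambda|<\infty$ (finite since $\lambda\not\in[0,+\infty)$) and the second bounded by $\sqrt{a}$ by~\eqref{b2}. This gives
\begin{equation*}
\left|\int_{\Real^d}\eta_n f_n\right| \leq C_\lambda\sqrt{a}\,\|\phi_n\|\,\|f_n\|\,.
\end{equation*}
For the denominator I would test $(H_V-\lambda)\psi_n = f_n$ against $\psi_n$ itself and extract the imaginary part to get $\int\Im V\,|\psi_n|^2 = \Im\lambda + \Im(\psi_n,f_n)$; since $|\Im V|\leq|V|$ and $\|\psi_n\|=1$, this forces $\|\phi_n\|^2\geq|\Im\lambda|-\|f_n\|$, which is uniformly positive for large $n$ because $\Im\lambda\neq 0$. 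Combining the two estimates yields the claim.

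The main obstacle is precisely the error term $\int\eta_n f_n$: its control simultaneously requires a uniform mapping bound on $\eta_n$ (which forces us to use the subordination hypothesis via~\eqref{b2}) and a uniform lower bound on $\|\phi_n\|$. The stronger assumption $\lambda\in\Com\setminus\Real$, as opposed to merely $\lambda\not\in[0,+\infty)$, plays an essential role in the latter step through the imaginary part of the energy identity.
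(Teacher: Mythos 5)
Your proof is correct and follows essentially the same route as the paper: introduce $\eta_n = (H_0-\lambda)^{-1}|V|^{1/2}\overline{\phi_n}$, combine the weak formulation of $(H_V-\lambda)\psi_n=f_n$ with the weak resolvent identity to produce $-\|\phi_n\|^2$ plus an error $\int\eta_n f_n$, control $\|\eta_n\|$ through the factorisation $(H_0-\lambda)^{-1}H_0^{1/2}\cdot H_0^{-1/2}|V|^{1/2}$ and~\eqref{b2}, and use $\Im\lambda\neq 0$ to lower-bound $\|\phi_n\|$. The only difference is cosmetic: you package the bookkeeping as a single closed identity with an explicit error term, whereas the paper proves~\eqref{i2.cont.bis} via the second representation theorem and then divides; the ingredients and the logic are the same.
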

\begin{proof}
The proof is similar to that of Lemma~\ref{Lem.BS}.
We have 
\begin{equation}\label{i1.cont}
  (\phi_n,K_{\lambda} \phi_n)
  = \int_{\Real^d} \eta_n(y) \, V(y) \, \psi_n(y) \, dy
  = v(\overline{\eta_n},\psi_n)
  \,,
\end{equation}
where $(\cdot,\cdot)$ denotes the inner product in $\sii(\Real^d)$
and the function
\begin{equation*}
  \eta_n := \int_{\Real^d} 
  \overline{\phi_n(x)} \, |V|^{1/2}(x) \, G_{{\lambda}}(x,\cdot) \, dx
  = (H_0-\lambda)^{-1} \, |V|^{1/2} \, \overline{\phi_n}
\end{equation*}
belongs to $H^1(\Real^d)$.
Indeed,
\begin{equation}\label{decomposition}
  \eta_n 
  = (H_0-\lambda)^{-1} \, H_0^{1/2} 
  H_0^{-1/2}\, |V|^{1/2} \, \overline{\phi_n}
  \,,
\end{equation}
where $\phi_n \in \sii(\Real^d)$ by~\eqref{Ass},
$H_0^{-1/2} |V|^{1/2}$ is bounded due to~\eqref{b2}
and $(H_0-\lambda)^{-1} H_0^{1/2}$ maps $\sii(\Real^d)$ to $H^1(\Real^d)$.
More specifically,
\begin{equation}\label{decomposition.bound}
  \|\eta_n\| \leq C_\lambda \, \sqrt{a} \, \|\phi_n\| 
  \,, \qquad \mbox{where} \qquad
  C_\lambda := \sup_{\xi\in[0,\infty)} \left| \frac{\xi}{\xi^2-\lambda} \right|
  \,.
\end{equation}
In analogy with~\eqref{i2}, we are thus allowed to write
\begin{equation}\label{i2.cont}
\begin{aligned}
  v(\overline{\eta_n},\psi_n)
  &= h_V(\overline{\eta_n},\psi_n) - \lambda \, (\overline{\eta_n},\psi_n)
  - (\nabla\overline{\eta_n},\nabla\psi_n) + \lambda \, (\overline{\eta_n},\psi_n)
  \\
  &= \big(\overline{\eta_n},(H_V-\lambda)\psi_n\big)
  - h_0(\overline{\psi_n},\eta_n) + \lambda \, (\overline{\psi_n},\eta_n)
  \,.
\end{aligned}
\end{equation}
By the second representation theorem (\cf~\cite[Thm.~VI.2.23]{Kato})
and~\eqref{decomposition},
\begin{equation}\label{i2.cont.bis}
\begin{aligned}
  h_0(\overline{\psi_n},\eta_n) - \lambda \, (\overline{\psi_n},\eta_n)
  &= \big(H_0^{1/2}\overline{\psi_n},H_0^{1/2}\eta_n\big)
  - \lambda \, (\overline{\psi_n},\eta_n)
  \\
  &= \big(H_0^{1/2}\overline{\psi_n},
  (H_0-\lambda+\lambda) (H_0-\lambda)^{-1} H_0^{-1/2}|V|^{1/2}\overline{\phi_n}\big)
  - \lambda \, (\overline{\psi_n},\eta_n)
  \\
  &= \big(H_0^{1/2}\overline{\psi_n},
  H_0^{-1/2}|V|^{1/2}\overline{\phi_n}\big)
  \\
  &= \big( (H_0^{-1/2}|V|^{1/2})^*H_0^{1/2}\overline{\psi_n},
  \overline{\phi_n}\big)
  \\
  &= \big( |V|^{1/2} \overline{\psi_n},
  \overline{\phi_n}\big)
  \\
  &= \|\phi_n\|^2
  \,.
\end{aligned}
\end{equation}

Since 
$$
  \|H_V\psi_n-\lambda\psi_n\|
  = \sup_{\stackrel[\varphi\not=0]{}{\varphi \in \sii(\Real^d)}}
  \frac{|(\varphi,H_V\psi_n-\lambda\psi_n)|}{\|\varphi\|}
  \geq \left| \|\nabla\psi_n\|^2 + v[\psi_n] - \lambda \right|
  \,,
$$
where the inequality is obtained by choosing $\varphi:=\psi_n$,
and the left hand side vanishes as $n \to \infty$,
we see that $\Im v[\psi_n]$ tends to $\Im \lambda \not= 0$ as $n \to \infty$.
In particular, 
\begin{equation}\label{liminf}
  \liminf_{n \to \infty} \|\phi_n\| > 0
  \,.
\end{equation}

Using~\eqref{i2.cont.bis} in~\eqref{i2.cont}, recalling~\eqref{i1.cont},
dividing the obtained identity by~$\|\phi_n\|^2$
(which is non-zero for all sufficiently large~$n$ due to~\eqref{liminf})
and taking the limit as $n \to \infty$, we arrive at
$$
  \lim_{n \to \infty} 
  \frac{(\phi_n,K_{\lambda} \phi_n)}{\|\phi_n\|^2} + 1 
  = \lim_{n \to \infty} 
  \frac{\big(\overline{\eta_n},(H_V-\lambda)\psi_n\big)}{\|\phi_n\|^2}
  \,.
$$
In view of~\eqref{decomposition.bound} and~\eqref{liminf},
the right hand side equals zero by the hypothesis.
\end{proof}

Now we are in a position to establish the absence of 
the continuous spectrum outside $[0,+\infty)$.
\begin{Theorem}\label{Thm.cont1}
Let $d = 3$ and assume~\eqref{Ass}. Then 
$
  \sigma_\mathrm{c}(H_V) 
  \subset [0,+\infty)
$.
\end{Theorem}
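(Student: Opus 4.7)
The plan is to combine Lemma~\ref{Lem.bound} and Lemma~\ref{Lem.BS.cont} after a short preliminary reduction that rules out the real axis via m-sectoriality.

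First I would exploit m-sectoriality of $H_V$ to dispose of the half-line $(-\infty,0)$. By~\eqref{Ass} one has $|v[\psi]|\leq a\,\|\nabla\psi\|^2$, and hence
$$
\Re h_V[\psi] \;=\; \|\nabla\psi\|^2 + \Re v[\psi] \;\geq\; (1-a)\,\|\nabla\psi\|^2 \;\geq\; 0
$$
for every $\psi \in H^1(\Real^3)$. Therefore the numerical range of the form associated with $H_V$ lies in the closed right half-plane, and since $H_V$ is m-sectorial its spectrum is contained in the closure of that numerical range (\cf~\cite[Thm.~VI.3.2]{Kato}). Thus $\sigma(H_V)\subset\{z\in\Com:\Re z\geq 0\}$, which already excludes $(-\infty,0)$. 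Together with the trivial observation that $\{z:\Im z=0,\ \Re z\geq 0\}=[0,+\infty)$, this reduces the problem to excluding $\lambda\in\sigma_{\mathrm{c}}(H_V)$ with $\Im\lambda\neq 0$ — precisely the setting in which Lemma~\ref{Lem.BS.cont} applies.

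Next I would extract an approximate-eigenvector sequence at such a $\lambda$. Since $\lambda\in\sigma_{\mathrm{c}}(H_V)$, the closed operator $H_V-\lambda$ is injective with dense but non-closed range (otherwise the range, being dense and closed, would equal $\sii(\Real^3)$, contradicting non-surjectivity); consequently $H_V-\lambda$ is not bounded below, so there exist $\psi_n\in\Dom(H_V)$ with $\|\psi_n\|=1$ and $\|(H_V-\lambda)\psi_n\|\to 0$.

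Now the two lemmas collide. Setting $\phi_n:=|V|^{1/2}\psi_n$, Lemma~\ref{Lem.BS.cont} yields
$$
\lim_{n\to\infty}\frac{(\phi_n,K_\lambda \phi_n)}{\|\phi_n\|^2} \;=\; -1,
$$
while Lemma~\ref{Lem.bound} (applicable because $\lambda\notin(0,+\infty)$) combined with the Cauchy-Schwarz inequality gives
$$
\left|\frac{(\phi_n,K_\lambda \phi_n)}{\|\phi_n\|^2}\right| \;\leq\; \|K_\lambda\| \;\leq\; a \;<\; 1,
$$
which contradicts the previous identity. The only mildly delicate point is the non-self-adjoint version of the Weyl criterion used to produce $\{\psi_n\}$ in the second step; modulo that standard observation, the theorem is a very short synthesis of the two lemmas already at our disposal.
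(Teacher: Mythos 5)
Your proof is correct and takes essentially the same route as the paper: m-sectoriality confines $\sigma(H_V)$ to the closed right half-plane, and then Lemmas~\ref{Lem.bound} and~\ref{Lem.BS.cont} are combined to derive the contradiction $1 \le a < 1$ at any $\lambda$ with $\Im\lambda\neq 0$. The one small difference is in how you obtain the approximate-eigenvector sequence: the paper invokes the characterisation of the essential spectrum by singular sequences (Edmunds--Evans, Thm.~IX.1.3), whereas you argue directly that a closed, injective operator with dense non-closed range cannot be bounded below; this is a cleaner, self-contained observation, and it is all that Lemma~\ref{Lem.BS.cont} actually needs, since the weak convergence to zero that a singular sequence also carries is never used.
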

\begin{proof}
By~\eqref{Ass}, $\Re h_V[\psi] \geq (1-a) \|\nabla\psi\|^2 \geq 0$
for all $\psi \in H^1(\Real^3)$. 
Since~$H_V$ is m-sectorial, it follows that the spectrum of~$H_V$
is contained in the right complex half-plane
(\cf~\cite[Thm.~V.3.2]{Kato}).
Assume that there exists
$\lambda \in \Com$ with $\Re\lambda \geq 0$ and $\Im\lambda \not= 0$
such that $\lambda \in \sigma_\mathrm{c}(H_V)$.
Then~$\lambda$ belongs to the kind of essential spectrum 
which is characterised by the existence of a singular sequence of~$H_V$
corresponding to~$\lambda$ (\cf~\cite[Thm.~IX.1.3]{Edmunds-Evans}):  
$\exists \{\psi_n\}_{n \in \Nat} \subset \Dom(H_V)$, 
$\|\psi_n\|=1$ for all $n \in \Nat$, 
$\|(H_V-\lambda)\psi_n\| \to 0$ as $n \to \infty$
and $\{\psi_n\}_{n \in \Nat}$ is weakly converging to zero.
By Lemma~\ref{Lem.BS.cont} and Lemma~\ref{Lem.bound},
\begin{equation*} 
  a
  \geq 
  \|K_\lambda\|
  \geq
  \left| \lim_{n \to \infty} 
  \frac{(\phi_n,K_{\lambda} \phi_n)}{\|\phi_n\|^2} 
  \right|
  =  1 
  \,,
\end{equation*}
This is a contradiction because $a < 1$.
\end{proof}

We remark that the last step of the proof of Theorem~\ref{Thm.cont1}
requires Lemma~\ref{Lem.bound} for which $d=3$ is crucial.

\section{Inclusion of the spectrum in 
\texorpdfstring{$[0,+\infty)$}{positive}}\label{Sec.cont2}
The opposite inclusion follows by an explicit 
construction of a singular sequence of~$H_V$
corresponding to non-negative energies.
Since the operator~$H_V$ is defined through its sesquilinear form,
it is convenient to have a criterion which requires that
the singular sequence is in the form domain only.
Unable to find a reference in the general case,
we state the abstract version first
(for the self-adjoint situation, see \cite[Thm.~5]{KL}).
\begin{Lemma}\label{Lem.criterion}
Let~$H$ be an m-sectorial accretive operator in a complex Hilbert space~$\mathcal{H}$
which is associated with a (densely defined, closed, sectorial) 
sesquilinear form~$h$. Given $\lambda \in \Com$, assume that there exists
a sequence $\{\phi_n\}_{n\in\Nat} \subset \Dom(h)$ 
such that $\|\phi_n\|=1$ for all $n\in\Nat$ and
\begin{equation}\label{criterion} 
  \sup_{\stackrel[\psi\not=0]{}{\psi \in \Dom(h)}}
  \frac{|h(\phi_n,\psi) - \lambda \, (\phi_n,\psi)|}{\|\psi\|_{\Dom(h)}}
  \xrightarrow[n \to \infty]{}
  0 \,,
\end{equation}
where $\|\psi\|_{\Dom(h)} := \sqrt{\Re h[\psi]+\|\psi\|^2}$.
Then $\lambda \in \sigma(H)$.
\end{Lemma}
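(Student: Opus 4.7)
The plan is to argue by contradiction: assume $\lambda\in\rho(H)$ and show $\|\phi_n\|\to 0$, which contradicts $\|\phi_n\|=1$. The idea is to rewrite the pairing $(\phi_n,f)$ for arbitrary $f\in\mathcal{H}$ in the form $h(\phi_n,\psi)-\lambda(\phi_n,\psi)$ for a judiciously chosen test vector $\psi$ whose $\|\cdot\|_{\Dom(h)}$-norm is controlled by $\|f\|$, and then invoke~\eqref{criterion}.

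To produce such a $\psi$, I would use that $\bar\lambda\in\rho(H^*)$ and set $\psi:=(H^*-\bar\lambda)^{-1}f\in\Dom(H^*)\subset\Dom(h^*)=\Dom(h)$. The first representation theorem applied to the adjoint pair $(h^*,H^*)$ gives $h^*(\psi,\phi_n)=(H^*\psi,\phi_n)$ for every $\phi_n\in\Dom(h)$, whence $h(\phi_n,\psi)=\overline{h^*(\psi,\phi_n)}=(\phi_n,H^*\psi)$ by complex conjugation. This yields the crucial identity
\[
(\phi_n,f)=(\phi_n,(H^*-\bar\lambda)\psi)=h(\phi_n,\psi)-\lambda(\phi_n,\psi),
\]
so that hypothesis~\eqref{criterion} bounds the right-hand side by $\eps_n\|\psi\|_{\Dom(h)}$ for some $\eps_n\to 0$.

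It remains to show $\|\psi\|_{\Dom(h)}\leq C(\lambda)\|f\|$ uniformly in $f$. For this I would exploit that, since $\psi\in\Dom(H^*)$, the representation identity $h^*[\psi]=(H^*\psi,\psi)$ combined with $\Re h[\psi]=\Re h^*[\psi]$ gives $\Re h[\psi]\leq\|H^*\psi\|\|\psi\|$; together with the elementary bounds $\|\psi\|\leq\|(H^*-\bar\lambda)^{-1}\|\|f\|$ and $\|H^*\psi\|\leq\|f\|+|\lambda|\|\psi\|$ this closes the estimate. Assembling everything yields $|(\phi_n,f)|\leq C(\lambda)\eps_n\|f\|$ for every $f\in\mathcal{H}$, so that $\|\phi_n\|\leq C(\lambda)\eps_n\to 0$, the sought contradiction.

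The conceptual obstacle is that $\phi_n$ lives a priori only in $\Dom(h)$, not in $\Dom(H)$, so the operator residual $(H-\lambda)\phi_n$ is not defined, and one cannot test directly against $(H-\lambda)^{-1}f$. Hypothesis~\eqref{criterion} should instead be read as saying that $\phi_n$ lies almost in the kernel of the canonical extension $H-\lambda:\Dom(h)\to\Dom(h)^*$; the duality argument above turns this into honest strong convergence $\phi_n\to 0$ in $\mathcal{H}$. The technical hinge is the uniform bound $\|(H^*-\bar\lambda)^{-1}f\|_{\Dom(h)}\leq C(\lambda)\|f\|$, which is precisely what the representation theorem squeezes out of mere $\mathcal{H}$-boundedness of the resolvent.
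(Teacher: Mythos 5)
Your overall strategy — argue by contradiction, read $(\phi_n,f)$ off the quadratic-form identity via a cleverly chosen test vector $\psi$, and close with a uniform bound on $\|\psi\|_{\Dom(h)}$ in terms of $\|f\|$ — is exactly the paper's, but your choice of $\psi$ does not fit the conventions in force here and the pivotal identity fails. The paper fixes $(\cdot,\cdot)$ (hence $h$) to be conjugate-linear in the \emph{first} argument, so the representation theorem reads $h(u,v)=(u,Hv)$ for $v\in\Dom(H)$, and for the adjoint pair $h^*(u,\psi)=(u,H^*\psi)$ for $\psi\in\Dom(H^*)$, equivalently $h(\psi,u)=(H^*\psi,u)$ with $\psi$ in the \emph{first} slot of $h$. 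Your claim $h(\phi_n,\psi)=(\phi_n,H^*\psi)$ for $\psi\in\Dom(H^*)$ needs the $\Dom(H^*)$ element in the \emph{second} slot of $h$; unwinding it, it is equivalent to $h(\phi_n,\psi)=h^*(\phi_n,\psi)$, i.e.\ to $h$ being symmetric — precisely what fails for complex potentials, the case this lemma exists to handle. Consequently, with $\psi=(H^*-\bar\lambda)^{-1}f$ the identity $(\phi_n,f)=h(\phi_n,\psi)-\lambda(\phi_n,\psi)$ is false in general, and the ensuing bound $|(\phi_n,f)|\leq\eps_n\,C(\lambda)\|f\|$ does not follow.

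The correct choice, and the one the paper makes, is $\psi:=(H-\lambda)^{-1}f\in\Dom(H)$. Then the representation theorem for $(h,H)$ itself yields $h(\phi_n,\psi)=(\phi_n,H\psi)$, whence
\begin{equation*}
  h(\phi_n,\psi)-\lambda(\phi_n,\psi)=(\phi_n,(H-\lambda)\psi)=(\phi_n,f)\,,
\end{equation*}
which is the desired identity. Your norm estimate then goes through essentially verbatim once $H^*$ is replaced by $H$: from $\Re h[\psi]=\Re(\psi,H\psi)\leq\|\psi\|\,\|H\psi\|$, $\|H\psi\|\leq\|f\|+|\lambda|\,\|\psi\|$ and $\|\psi\|\leq\|(H-\lambda)^{-1}\|\,\|f\|$ one gets $\|\psi\|_{\Dom(h)}\leq C(\lambda)\|f\|$, and~\eqref{criterion} then forces $\|\phi_n\|\to 0$, contradicting $\|\phi_n\|=1$. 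With this single correction your argument reproduces the paper's proof.
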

\begin{Remark}
Notice that the left hand side of~\eqref{criterion}
is the norm of the vector $H^*\phi_n-\overline{\lambda}\;\!\phi_n$ 
in the dual space $\Dom(h)^*$, 
when $\Dom(h)$ is thought as the subspace of~$\mathcal{H}$
equipped with the norm $\|\cdot\|_{\Dom(h)}$.
\end{Remark}
\begin{proof}
We proceed by contradiction: 
Assume the hypotheses of the theorem and $\lambda \not\in \sigma(H)$.
The latter means that for every $g \in \mathcal{Hilbert}$ 
there exists $\psi \in \Dom(H)$ such that $H\psi - \lambda\psi = g$.
That is, $\psi = (H-\lambda)^{-1} g$ and $(H-\lambda)^{-1}$ is bounded
as an operator on~$\mathcal{Hilbert}$ onto~$\mathcal{Hilbert}$. 
The weak formulation of the resolvent equation reads
\begin{equation}\label{res.eq}
  \forall \phi \in \Dom(h)
  \,, \qquad
  h(\phi,\psi) - \lambda \, (\phi,\psi) = (\phi,g)
  \,.
\end{equation}
Consequently, for every $\phi \in \Dom(h)$,
\begin{equation}\label{criterion.bound}
  C_\lambda \,
  \sup_{\stackrel[\psi\not=0]{}{\psi \in \Dom(h)}}
  \frac{| h(\phi,\psi) - \lambda \, (\phi,\psi) |}{\|\psi\|_{\Dom(h)}}
  \geq
  \sup_{\stackrel[g\not=0]{}{g \in \mathcal{Hilbert}}}
  \frac{| h(\phi,\psi) - \lambda \, (\phi,\psi) |}{\|g\|}
  = \sup_{\stackrel[g\not=0]{}{g \in \mathcal{Hilbert}}}
  \frac{|(\phi,g)|}{\|g\|}
  = \|\phi\|
  \,,
\end{equation}
where~$\psi$ and~$g$ are related through~\eqref{res.eq}
and the constant
$$
  C_\lambda := \sup_{\stackrel[g\not=0]{}{g \in \mathcal{Hilbert}}}
  \frac{\|\psi\|_{\Dom(h)}}{\|g\|}
$$
is finite because the resolvent $(H-\lambda)^{-1}$ 
maps~$\mathcal{Hilbert}$ onto $\Dom(H) \subset \Dom(h)$.
More specifically,
$$
\begin{aligned}
  \|\psi\|_{\Dom(h)}^2
  &= \Re h[(H-\lambda)^{-1}g] + \|(H-\lambda)^{-1}g\|^2  
  \\
  &= \Re \big((H-\lambda)^{-1}g,H(H-\lambda)^{-1}g\big)
  + \|(H-\lambda)^{-1}g\|^2  
  \\
  &\leq \left(
  \|(H-\lambda)^{-1}\| \|H(H-\lambda)^{-1}\|
  + \|(H-\lambda)^{-1}\|^2
  \right) \|g\|^2
  \,.
\end{aligned}
$$
Choosing $\phi := \phi_n$ in~\eqref{criterion.bound},
we get that the left hand side tends to zero as $n \to \infty$ 
by~\eqref{criterion},
while the right hand side equals one due to the normalisation 
of $\{\phi_n\}_{n\in\Nat}$, a contradiction.
\end{proof}

Now we are in a position to prove the inclusion 
of the interval $[0,+\infty)$ in the spectrum of~$H_V$.
The following result holds in all dimensions $d \geq 3$.
\begin{Theorem}\label{Thm.cont2}
Let $d \geq 3$ and assume~\eqref{Ass}. Then 
$
  \sigma(H_V) 
  \supset [0,+\infty)
$.
\end{Theorem}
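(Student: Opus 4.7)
The plan is to apply Lemma~\ref{Lem.criterion} to $H = H_V$ and $h = h_V$. For a given $\lambda \in [0,+\infty)$, I would construct an approximating sequence $\{\phi_n\}_{n \in \Nat} \subset C_0^\infty(\Real^d) \subset \Dom(h_V)$ by rescaling and modulating a fixed cutoff. Concretely, I would fix $\eta \in C_0^\infty(\Real^d)$ with $\|\eta\|=1$, choose any $k \in \Real^d$ with $|k|^2 = \lambda$, and set
\begin{equation*}
  \phi_n(x) := n^{-d/2} \eta(x/n) \, e^{ik\cdot x}
  = \chi_n(x) \, e^{ik\cdot x}
  \,,
\end{equation*}
so that $\|\phi_n\|=1$ by the $L^2$-scaling. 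These are classical Weyl-type quasimodes for $H_0$: a direct computation using $|k|^2=\lambda$ gives $(-\Delta - \lambda)\phi_n = [-\Delta\chi_n - 2ik\cdot\nabla\chi_n] e^{ik\cdot x}$, whose $L^2$-norm is $O(n^{-1})$ because $\|\nabla\chi_n\| = n^{-1}\|\nabla\eta\|$ and $\|\Delta\chi_n\| = n^{-2}\|\Delta\eta\|$.

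For any $\psi \in \Dom(h_V) = H^1(\Real^d)$, integration by parts (justified since $\phi_n \in C_0^\infty$) gives
\begin{equation*}
  h_V(\phi_n,\psi) - \lambda(\phi_n,\psi)
  = \int_{\Real^d} \overline{(-\Delta - \lambda)\phi_n} \, \psi
  + \int_{\Real^d} V \, \bar\phi_n \, \psi
  \,.
\end{equation*}
The first summand is bounded by $\|(-\Delta - \lambda)\phi_n\| \, \|\psi\| = O(n^{-1}) \, \|\psi\|$. For the potential term, the key observation is that $|\phi_n| = |\chi_n|$, so applying \eqref{Ass} to $\chi_n$ (which is legitimate since $\chi_n \in H^1(\Real^d)$) yields
\begin{equation*}
  \int_{\Real^d} |V| \, |\phi_n|^2
  = \int_{\Real^d} |V| \, |\chi_n|^2
  \leq a \, \|\nabla \chi_n\|^2
  = \frac{a\,\|\nabla\eta\|^2}{n^2}
  = O(n^{-2}) \,.
\end{equation*}
By the Schwarz inequality and another application of \eqref{Ass} (now to $\psi$), the potential term is bounded by
\begin{equation*}
  \bigg( \int |V||\phi_n|^2 \bigg)^{1/2}
  \bigg( \int |V||\psi|^2 \bigg)^{1/2}
  \leq O(n^{-1}) \cdot \sqrt{a}\,\|\nabla\psi\| \,.
\end{equation*}

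Finally, since $\Re h_V[\psi] \geq (1-a)\|\nabla\psi\|^2$, we have $\|\nabla\psi\|^2 \leq (1-a)^{-1}\|\psi\|_{\Dom(h_V)}^2$, so combining both estimates gives
\begin{equation*}
  \sup_{\stackrel[\psi\not=0]{}{\psi \in \Dom(h_V)}}
  \frac{|h_V(\phi_n,\psi) - \lambda(\phi_n,\psi)|}{\|\psi\|_{\Dom(h_V)}}
  = O(n^{-1}) \xrightarrow[n\to\infty]{} 0 \,.
\end{equation*}
Lemma~\ref{Lem.criterion} then yields $\lambda \in \sigma(H_V)$. Since $\lambda \in [0,+\infty)$ was arbitrary, this proves $\sigma(H_V) \supset [0,+\infty)$. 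The only delicate point is the control of the potential term, but the scaling $\chi_n(x) = n^{-d/2}\eta(x/n)$ plays well with \eqref{Ass}: concentrating mass on larger and larger balls forces $\|\nabla\chi_n\| \to 0$, which by the form-subordination carries the $V$-contribution to zero even though $V$ itself need not decay pointwise.
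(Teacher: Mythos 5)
Your proof is correct and follows essentially the same strategy as the paper's: you construct the identical Weyl-type quasimodes $\phi_n = n^{-d/2}\eta(\cdot/n)e^{ik\cdot x}$, exploit the scaling $\|\nabla\chi_n\| = O(n^{-1})$ together with $|\phi_n|=|\chi_n|$ and~\eqref{Ass} to kill the potential term, control the denominator in~\eqref{criterion} via $\Re h_V[\psi] \geq (1-a)\|\nabla\psi\|^2$, and invoke Lemma~\ref{Lem.criterion}. The only cosmetic difference is that you write out the Cauchy--Schwarz bound for the potential term explicitly as $(\int|V||\phi_n|^2)^{1/2}(\int|V||\psi|^2)^{1/2}$ rather than using the shorthand $\sqrt{|v[\phi_n]|}\sqrt{|v[\psi]|}$, which if anything is a bit more careful notation.
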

\begin{proof}
We construct the sequence $\{\phi_n\}_{n\in\Nat}$
from Lemma~\ref{Lem.criterion} applied to~$H_V$ by setting
$$
  \phi_n(x) := \varphi_n(x) \, e^{i k\cdot x}
  \,, 
$$
where $k \in \Real^d$ is such that 
$|k|^2 = \lambda \in [0,+\infty)$,
$\varphi_n(x) := n^{-d/2} \varphi_1(x/n)$ 
for all $n \in \Nat$ (with the convention $0 \not\in \Nat$)
and $\varphi_1 \in C_0^\infty(\Real^d)$ is a function
such that $\|\varphi_1\|=1$.
The normalisation factor in the definition of~$\varphi_n$
is chosen in such a way that 
$$
  \|\varphi_n\| = \|\varphi_1\| = 1 
  \,, \qquad
  \|\nabla\varphi_n\| = n^{-1} \, \|\nabla\varphi_1\|
  \,, \qquad
  \|\Delta\varphi_n\| = n^{-2} \, \|\Delta\varphi_1\|
$$
for all $n \in \Nat$.
Then $\|\phi_n\| = 1$ and $\phi_n \in \Dom(h_V) = \Dom(h_0) = H^1(\Real^d)$ 
for all $n \in \Nat$. Furthermore,
\begin{equation}\label{Weyl1}
  \left\| -\Delta\phi_n - \lambda\;\!\phi_n \right\|
  = \left\|
  -\Delta\varphi_n + 2 i k \cdot \nabla \varphi_n 
  \right\|
  \leq \|\Delta\varphi_n\| + 2 \, |k| \;\! \|\nabla\varphi_n\| 
  \xrightarrow[n \to \infty]{}
  0 \,.
\end{equation}
In fact, $\{\phi_n\}_{n\in\Nat}$ is the usual singular sequence of~$H_0$
corresponding to~$\lambda$.
At the same time,
\begin{equation}\label{Weyl2}
  \big| v[\phi_n] \big|
  \leq \big\| |V|^{1/2} \varphi_n \big\|^2
  \leq a \, \|\nabla \varphi_n\|^2
  \xrightarrow[n \to \infty]{}
  0 \,,
\end{equation}
where the second inequality follows by~\eqref{Ass}.

The numerator in~\eqref{criterion} can be estimated as follows
$$
\begin{aligned}
  |h_V(\phi_n,\psi) - \lambda \, (\phi_n,\psi)|
  &= |(-\Delta\phi_n -\lambda\phi_n,\psi) + v(\phi_n,\psi)|
  \\
  &\leq \left\|-\Delta\phi_n -\lambda\;\!\phi_n\right\| \|\psi\|
  + \sqrt{\left|v[\phi_n]\right|} \sqrt{\left|v[\psi]\right|} 
  \\
  & \leq \left\|-\Delta\phi_n -\lambda\;\!\phi_n\right\| \|\psi\|
  + \sqrt{\left|v[\phi_n]\right|} \, \sqrt{a} \, \|\nabla\psi\|
  \,,
\end{aligned}
$$
where the last inequality is due to~\eqref{Ass}.
As for the denominator in~\eqref{criterion}, 
employing~\eqref{Ass} again, we have 
$$
  \|\psi\|_{\Dom(h)}^2 
  = \|\nabla\psi\|^2 + \Re v[\psi] + \|\psi\|^2
  \geq (1-a) \, \|\nabla\psi\|^2 + \|\psi\|^2
  \geq (1-a) \, \|\psi\|_{\Dom(h_0)}^2 
  \,,
$$
where $\|\cdot\|_{\Dom(h_0)}$ is just the usual norm of $H^1(\Real^d)$.
Putting these estimates together, we have the bound
$$
  \sup_{\stackrel[\psi\not=0]{}{\psi \in \Dom(h_V)}}
  \frac{|h_V(\phi_n,\psi) - \lambda \, (\phi_n,\psi)|}{\|\psi\|_{\Dom(h_V)}}
  \leq 
  \frac{\left\|-\Delta\phi_n -\lambda\;\!\phi_n\right\|  
  + \sqrt{\left|v[\phi_n]\right|} \, \sqrt{a}}
  {\sqrt{1-a}}
  \,,
$$
where the right hand side tends to zero due to~\eqref{Weyl1} and~\eqref{Weyl2}.

Summing up, given $\lambda \in [0,+\infty)$,
we have shown that the sequence $\{\phi_n\}_{n\in\Nat}$ satisfies
all the hypotheses of Lemma~\ref{Lem.criterion}.
Consequently, $[0,+\infty) \subset \sigma(H_V)$. 
\end{proof}
\begin{proof}[Proof of Theorem~\ref{Thm.main}]
To conclude, Theorem~\ref{Thm.main} follows as a consequence 
of Theorems~\ref{Thm.evs}, \ref{Thm.cont1}, \ref{Thm.cont2}
and the absence of the residual spectrum
justified already in Section~\ref{Sec.intro}.
\end{proof}

\section{Absence of eigenvalues: the method of multipliers}\label{Sec.mult}
In this last section, we prove Theorems~\ref{Thm.any}, \ref{thm:radi}
and~\ref{thm:radimagn} by a completely different approach 
in comparison with the previous sections.
Namely, we extend the method of multipliers 
developed in the self-adjoint context in~\cite{Barcelo-Vega-Zubeldia_2013}
to complex-valued potentials. 
Here we proceed in all dimensions $d \geq 3$.

Let us consider the equation
\begin{equation}\label{eq:main}
  \Delta u + \lambda u=f,
\end{equation}
where~$\lambda$ is any complex constant;
we write $\lambda_1 := \Re\lambda$ and $\lambda_2 := \Im\lambda$.
Given a measurable function $f : \Real^d \to \Com$
that we assume to merely belong to $H^{-1}(\Real^d)$,
we say that~$u$ is a \emph{solution} of~\eqref{eq:main} 
if $u \in H^1(\Real^d)$ and 
\begin{equation}\label{eq.weak}
  \forall v \in H^1(\Real^d)
  \,, \qquad
  - (\nabla v,\nabla u) + \lambda \, (v,u) = (v,f)
  \,.
\end{equation}
Here, with an abuse of notation, the same symbol $(\cdot,\cdot)$
is used for the inner product in $\sii(\Real^d)$ 
and for the duality pairing between $H^1(\Real^d)$ and $H^{-1}(\Real^d)$
on the left and right hand side of~\eqref{eq.weak}, respectively.
Equation~\eqref{eq:main} is related to the eigenvalue problem of~$H_V$
by setting $f:=Vu$. Notice that any eigenvalue~$\lambda$ of~$H_V$   
necessarily satisfies $\lambda_1>0$ due to~\eqref{Ass}.
If~$u$ is a solution of~\eqref{eq:main}, 
we set 
\begin{equation}\label{eq:uepsilon}
  u^\pm(x) := e^{\pm i \sgn(\lambda_2) \;\! \lambda_1^{\frac12}|x|} \, u(x)
  \,, \qquad
  \sgn(\lambda_2) := 
  \begin{cases}
    \frac{\lambda_2}{|\lambda_2|}
    & \mbox{if} \quad \lambda_2\not=0 \,,
    \\
    1 
    & \mbox{if} \quad \lambda_2=0 \,.
  \end{cases}
\end{equation}

In order to prove Theorem \ref{Thm.any}, 
we establish the following result, 
which shows that~\eqref{eq:main} has no non-trivial solutions
provided that~$f$ is small in a suitable sense. 
\begin{Theorem}\label{Thm.mult}
Let $d \geq 3$.
Let $u$ be a solution of~\eqref{eq:main} 
with $\Re\lambda >0$, and assume that $f$ satisfies
  \begin{equation}\label{eq:assf}
    \|xf\|\leq \Lambda \, \|\nabla u^-\|
  \,,
  \qquad
   \|xf\|\leq \Lambda \, \|\nabla u\|
  \,,
  \end{equation}
where~$\Lambda$ is determined by 
\begin{equation}\label{eq:assf.Lambda}
  \frac{2(2d-3)}{d-2}\,\Lambda+\frac{\sqrt 2}{\sqrt{d-2}}\,\Lambda^{\frac32}<1
  \,.
\end{equation}
Then $u =0$.
\end{Theorem}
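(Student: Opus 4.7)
The plan is to adapt the method of multipliers for the Helmholtz-type equation, developed in the self-adjoint case in~\cite{Barcelo-Vega-Zubeldia_2013}, to the complex-$\lambda$ setting of \eqref{eq:main}. The strategy is to test \eqref{eq:main} (in the weak sense \eqref{eq.weak}) against several carefully chosen multipliers, separate real and imaginary parts, and combine the resulting integral identities. A first elementary pair comes from the choice $v=u$:
\begin{equation*}
-\|\nabla u\|^2 + \lambda_1\|u\|^2 = \Re(u,f),
\qquad
\lambda_2\|u\|^2 = \Im(u,f),
\end{equation*}
which, together with \eqref{eq:assf} and the classical Hardy inequality \eqref{Hardy} in the form $\bigl\||x|^{-1}u\bigr\|\leq \frac{2}{d-2}\|\nabla u\|$, already give crude control of $\lambda_1\|u\|^2$ and $|\lambda_2|\,\|u\|^2$ by $\|\nabla u\|^2$.

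The main step is a Morawetz--Rellich type virial identity, obtained by testing \eqref{eq:main} against the symmetrised radial derivative multiplier
\begin{equation*}
  Mu := \hat x\cdot\nabla\bar u + \frac{d-1}{2\,|x|}\,\bar u,
  \qquad
  \hat x := \frac{x}{|x|},
\end{equation*}
and taking the real part. Standard integration by parts on $\Real^d$ (justified by approximating $u\in H^1(\Real^d)$ by $C_0^\infty$ functions and controlling the weighted terms via \eqref{Hardy}) produces an identity whose left-hand side contains, with definite signs when $\Re\lambda>0$, the tangential piece $\int |\nabla_T u|^2/|x|$ with $\nabla_T u := \nabla u-\hat x(\hat x\cdot\nabla u)$, the Hardy weight $\tfrac{(d-1)(d-3)}{4}\int|u|^2/|x|^3$, and a positive multiple of $\lambda_1\int|u|^2/|x|$, while the right-hand side is $-\Re\!\int f\,\overline{Mu}$. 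This identity does not directly control the radial derivative $\hat x\cdot\nabla u$, which motivates repeating the computation for the phase-twisted function $u^-$ from \eqref{eq:uepsilon}.

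Writing $\sigma := \sgn(\lambda_2)$ and $\phi := -i\sigma\lambda_1^{1/2}|x|$, a direct calculation based on the chain rule $\Delta(e^{-\phi}w) = e^{-\phi}[\Delta w-2\nabla\phi\cdot\nabla w+(|\nabla\phi|^2-\Delta\phi)w]$ shows that $u^-$ satisfies
\begin{equation*}
  \Delta u^- + 2i\sigma\lambda_1^{1/2}\,\hat x\cdot\nabla u^- + i\sigma\lambda_1^{1/2}\,\frac{d-1}{|x|}\,u^- + i\lambda_2 u^- = e^\phi f,
\end{equation*}
so that the phase has absorbed the $\lambda_1 u$ contribution of \eqref{eq:main} into a first-order radial drift plus a Hardy-type lower-order term. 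Running the Morawetz computation for this modified equation produces a second identity whose left-hand side controls $\|\hat x\cdot\nabla u^-\|^2$; the specific choice $\sigma=\sgn(\lambda_2)$ in \eqref{eq:uepsilon} is precisely what guarantees that the lower-order imaginary terms cooperate rather than obstruct. Combining the two virial identities with the elementary energy identities, bounding each right-hand side by Cauchy--Schwarz together with \eqref{Hardy} to introduce $\|xf\|$, and finally invoking \eqref{eq:assf} to replace $\|xf\|$ by $\Lambda\|\nabla u\|$ or $\Lambda\|\nabla u^-\|$ according to which of the two bounds is sharper, should yield after careful bookkeeping an inequality of the form
\begin{equation*}
  \Bigl(1-\tfrac{2(2d-3)}{d-2}\,\Lambda-\tfrac{\sqrt{2}}{\sqrt{d-2}}\,\Lambda^{3/2}\Bigr)\bigl(\|\nabla u\|^2+\lambda_1\|u\|^2\bigr)\leq 0,
\end{equation*}
from which hypothesis \eqref{eq:assf.Lambda} forces $\|\nabla u\|=0$ and hence $u\equiv 0$.

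The main obstacle I anticipate is the combinatorics of this final step: identifying the precise linear combination of the two virial identities and the elementary energy identities that cancels all indefinite cross terms and produces the sharp thresholds in \eqref{eq:assf.Lambda}. The fractional exponent $\Lambda^{3/2}$ is the signature of a non-standard Cauchy--Schwarz splitting, in which \eqref{Hardy} is interpolated with the basic energy identity to absorb one of the remaining cross terms rather than applied directly; pinning down exactly where such an interpolation is needed, and choosing at each stage whether to pair $\|xf\|$ against $\|\nabla u\|$ or $\|\nabla u^-\|$, is the most delicate technical point of the proof.
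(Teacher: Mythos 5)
Your overall strategy is in the right spirit (method of multipliers, phase-twisting to $u^-$, Morawetz/Rellich-type identities), but there is a structural mismatch between your chosen multiplier and the form of hypothesis~\eqref{eq:assf}, and this mismatch would prevent the argument from closing.

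You propose the classical Morawetz multiplier $Mu = \hat x\cdot\nabla\bar u + \tfrac{d-1}{2|x|}\bar u$, \ie~the choice $G_3(x)=|x|$ in the notation of the paper (this matches the $1/|x|$-weighted terms you describe on the left-hand side). The paper instead uses the \emph{dilation/Rellich} multiplier with $G_3(x)=|x|^2$, so that $\nabla G_3 = 2x$ and the multiplier is (a variant of) $2x\cdot\nabla u + d\,u$. The distinction matters: with $G_3=|x|^2$, the right-hand side naturally produces terms like $\int |x|\,f\,\partial_r\bar u^-$ and $\int f\,\bar u$, which by Cauchy--Schwarz and Hardy are controlled by $\|xf\|\,\|\nabla u^-\|$ — exactly what \eqref{eq:assf} gives. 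With your choice $G_3=|x|$, the leading term on the right-hand side is $\int f\,\hat x\cdot\nabla\bar u$, which carries \emph{no} factor of $|x|$; pairing it with $\|xf\|$ would require bounding $\|\nabla u/|x|\|$, and Hardy's inequality does not control that by $\|\nabla u\|$ for $u\in H^1$ only. Concretely, the coefficient $\tfrac{2(2d-3)}{d-2}$ in \eqref{eq:assf.Lambda} decomposes exactly as $\tfrac{2(d-1)}{d-2}+2$ coming from the two $G_3=|x|^2$ terms ($(1-d)\int f\bar u$ and $-2\int |x|f\,\partial_r\bar u^-$), and the $\Lambda^{3/2}$ term arises from a third piece that uses the elementary $L^2$-bound $|\lambda_2|\,\|u\|^2\leq\int|f||u|$ to interpolate; none of this structure emerges from the $G_3=|x|$ identity.

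Two further points you omit. First, the paper splits the proof into $|\lambda_2|\leq\lambda_1$ and $|\lambda_2|>\lambda_1$: the virial argument only closes in the first regime, while the second is handled separately by the elementary identity \eqref{eq:outside}, which forces $\lambda_1\pm\lambda_2\geq 0$ unless $u=0$; without this split your estimate of the cross term $\tfrac{|\lambda_2|}{\lambda_1^{1/2}}\int|x||f||u|$ would not close. Second, the approximation step is not a formality: the paper works with the cutoff-and-mollified $u_{R,\delta}$ throughout, carefully tracking error terms $\epsilon(R)$ coming from the commutator $\widetilde f_R - f_R = -2\nabla\xi_R\cdot\nabla u - u\Delta\xi_R$ and then passes $\delta\to 0$ and $R\to\infty$; your remark that one can ``approximate by $C_0^\infty$ and control via Hardy'' glosses over the fact that the multiplier $2x\cdot\nabla u$ is unbounded and the boundary terms at infinity must be killed by the cutoff. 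Finally, your target inequality has $\|\nabla u\|^2 + \lambda_1\|u\|^2$ where the paper obtains $\|\nabla u^-\|^2$; these differ by the cross term $-2\sgn(\lambda_2)\lambda_1^{1/2}\Im\int\bar u\,\partial_r u$, and it is $\|\nabla u^-\|^2$ that appears with a definite sign after the algebra, so it is the correct quantity to target.
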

\begin{proof}
The proof relies on direct techniques, 
based on multiplication and integration by parts, 
inspired by~\cite{Barcelo-Vega-Zubeldia_2013}, 
in which the methods by \cite{BurqPlanchonStalker04-a, Ikebe-Saito} 
are developed and refined.
Here we propose some slight modifications in the arguments, 
essentially due to the fact that we need to handle complex-valued potentials.
To save space, we abbreviate $\int := \int_{\Real^d}$ 
and omit arguments of integrated functions.

Following~\cite{Barcelo-Vega-Zubeldia_2013},  
we divide the proof into two cases: 
$|\lambda_2|\leq\lambda_1$ and $|\lambda_2|>\lambda_1$.

\paragraph{\fbox{Case $|\lambda_2|\leq\lambda_1$.}} 
Our first step consists in approximating solutions of~\eqref{eq.weak} 
by a standard cutoff and mollification argument, 
which is fundamental to make rigorous the proof in the sequel.
To this aim, let $\xi_R:\mathbb R^d\to[0,1]$ be a smooth function such that
\begin{equation}\label{eq:xi}
  \xi = 1\ \text{in }B_R \,,
  \qquad
  \xi = 0\ \text{in }\mathbb R^d\setminus B_{2R} \,,
  \qquad
  |\nabla\xi_R|\leq 2R^{-1} \,,
  \qquad
  |\Delta\xi_R|\leq 2R^{-1}|x|^{-1} \,,
\end{equation}
for any $R>0$ sufficiently large, 
where $B_R:=\{|x| < R\}$. 
For a function $g:\mathbb R^d\to\mathbb C$, we then denote $g_R:=g\,\xi_R$.
If $u\in H^1(\mathbb R^d)$ is a solution to \eqref{eq.weak}, 
we see that $u_R\in H^1(\mathbb R^d)$ solves
\begin{equation}\label{eq:mainR}
  \Delta u_R + \lambda u_R=
  f_R-2\nabla\xi_R\cdot\nabla u-u\Delta\xi_R=:\widetilde f_R
\end{equation}
in the weak sense of~\eqref{eq.weak}.
Notice that, since $f$ satisfies conditions \eqref{eq:assf} and \eqref{eq:assf.Lambda}, 
we have
\begin{equation}\label{eq:newright}
  \|x\widetilde f_R\|\leq\Lambda
\left\|\nabla u_{R}^-\right\|+\epsilon^2(R) \,,
  \qquad
  \|x\widetilde f_R\|\leq\Lambda\left\|\nabla u_{R}\right\|+\epsilon^2(R) \,,
  \qquad
  \lim_{R\to\infty}\epsilon^2(R)=0 \,.
\end{equation}
Indeed, by \eqref{eq:xi},
\begin{equation*}
   \|x\widetilde f_R\|\leq\|xf_R\|
  +8\left(\int_{R<|x|<2R}|\nabla u|^2\right)^{\frac12}
   +4R^{-2}\left(\int_{R<|x|<2R}|u|^2\right)^{\frac12}
  \,,
\end{equation*}
where the last two terms tends to~$0$ as $R\to\infty$, 
since $u\in H^1(\mathbb R^d)$.

Let now $\phi\in C_0^\infty(\mathbb R^d)$ 
be a function such that $\int\phi=1$, 
and define, for any $\delta>0$, 
$\phi_\delta(x):=\delta^{-d}\phi\left(\frac x\delta\right)$. 
If $u\in H^1(\mathbb R^d)$ is a solution to~\eqref{eq.weak}, 
we see that $u_{R,\delta} := u_R \ast \phi_\delta$ solves
\begin{equation*}
  \Delta u_{R,\delta} + \lambda \, u_{R,\delta} = \widetilde f_{R,\delta}
\end{equation*}
in the weak sense of~\eqref{eq.weak}, where
$\widetilde f_{R,\delta} := \widetilde f_{R}\ast\phi_\delta$. 
More specifically,
\begin{equation}\label{eq:maindeltar}
  \forall v\in H^1(\mathbb R^d) \,,
  \qquad
  \left(-\nabla v,\nabla u_{R,\delta}\right)
  + \lambda\,\left(v,u_{R,\delta}\right)
  = \big(v,\widetilde f_{R,\delta}\big)
  \,.
\end{equation}
By \eqref{eq:newright}, it turns out that
\begin{equation}\label{eq:newright2}
  \|x\widetilde f_{R,\delta}\|\leq\Lambda
\left\|\nabla u_{R,\delta}^-\right\|+\epsilon^2(R) \,,
  \qquad
  \|x\widetilde f_{R,\delta}\|\leq\Lambda\left\|\nabla u_{R,\delta}\right\|
  +\epsilon^2(R) \,,
  \qquad
  \lim_{R\to\infty}\epsilon^2(R)=0 \,,
\end{equation}
where
$
  u_{R,\delta}^-
  := u_{R}^-\ast\phi_\delta
$ 
and $\Lambda$ as in \eqref{eq:assf.Lambda}.

We can now start with suitable algebraic manipulations of equation~\eqref{eq:maindeltar},
which suitably approximates~\eqref{eq.weak}. 
Let $G_1,G_2,G_3:\mathbb R^d\to\mathbb R$ be three smooth functions.
Choosing $v:=G_1 u_{R,\delta}$ in~\eqref{eq:maindeltar}, integrating by parts
and taking the real part of the resulting identity, 
we arrive at the identity
  \begin{equation}\label{eq:id1}
    \lambda_1 \int G_1 |u_{R,\delta}|^2
    -\int G_1 |\nabla u_{R,\delta}|^2
    +\frac12 \int \Delta G_1 \, |u_{R,\delta}|^2 
    =
    \Re\int \widetilde f_{R,\delta} \, G_1 \, \overline{u_{R,\delta}}
    \,.
  \end{equation}
Analogously, choosing~$v:=G_2 u$ in~\eqref{eq.weak} 
and taking the imaginary part of the resulting identity, 
we obtain
\begin{equation}\label{eq:id2}
  \lambda_2\int G_2 |u_{R,\delta}|^2
  -\Im\int \nabla G_2 \cdot \overline{u_{R,\delta}}\nabla u_{R,\delta} 
  =
  \Im \int \widetilde f_{R,\delta} \, G_2 \,\overline{u_{R,\delta}}
  \,,
\end{equation}
where the dot denotes the scalar product in~$\Real^d$.  
Finally, choosing 
$v:=2\nabla G_3 \cdot \nabla u_{R,\delta} + \Delta G_3 \, u_{R,\delta}$ 
in~\eqref{eq.weak}, integrating by parts
and taking the real part of the resulting identity, 
we get
\begin{align}\label{eq:id3}
&
  \int\nabla u_{R,\delta} \cdot \nabla^2 G_3 \cdot \nabla\overline{u_{R,\delta}}
  -\frac14\int \Delta^2 G_3 \, |u_{R,\delta}|^2
  +\lambda_2 \, \Im\int \nabla G_3 \cdot u_{R,\delta}\nabla\overline{u_{R,\delta}}
  \\
  &
  \ \ \ 
  =
  -\frac12\Re\int \widetilde f_{R,\delta} \, \Delta G_3 \, \overline{u_{R,\delta}}
  -\Re\int \widetilde f_{R,\delta} \, \nabla G_3 \cdot \nabla\overline{u_{R,\delta}}
  \,,
  \nonumber
\end{align}
where $\nabla^2 G_3$ denotes the Hessian matrix of~$G_3$
and $\Delta^2 := \Delta\Delta$ is the bi-Laplacian.
Notice that identities \eqref{eq:id1}, \eqref{eq:id2}, \eqref{eq:id3} are justified, 
since $u_{R,\delta}\in C_0^\infty(\mathbb R^d)$ and $G_1,G_2,G_3$ are smooth, 
therefore bounded, together with their derivatives of any order, 
inside the support of $u_{R,\delta}$.

In the following, we assume that $G_1,G_2,G_3$ are radial, 
\ie\ there exist smooth functions $g_1,g_2,g_3: [0,\infty) \to \Real$ 
such that $G_i(x)=g_i(|x|)$ for all $x \in \Real^d$ and $i \in \{1,2,3\}$.
Then 
$$
  \nabla G_i(x) = g_i'(|x|) \frac{x}{|x|}
  \,, \quad
  \Delta G_i(x) = g_i''(|x|) + g_i'(|x|) \frac{d-1}{|x|}
  \,, \quad
  \nabla^2 G_i(x) = g_i''(|x|) \frac{xx}{|x|^2}
  + \frac{g_i'(|x|)}{|x|} \left(I - \frac{xx}{|x|^2} \right)
  \,,
$$
where~$I$ denotes the identity on~$\Real^d$ 
and~$xx$ is the dyadic product of~$x$ and~$x$. 
For any $g:\mathbb R^d\to\mathbb C$, denote by
$$
  \partial_r g(x) := \frac{x}{|x|} \cdot \nabla g(x) 
  \qquad \mbox{and} \qquad
  \nabla_{\!\tau} g(x) := \left(I - \frac{xx}{|x|^2} \right) \cdot \nabla g(x) 
$$
the radial derivative and the angular gradient of~$g$, respectively,
so that 
$
   |\nabla g|^2 = |\partial_r g|^2 + |\nabla_{\!\tau} g|^2
$. 

Taking the sum 
\eqref{eq:id1} + $\lambda_1^{\frac12}$\eqref{eq:id2} + \eqref{eq:id3}, 
we obtain
\begin{align}
\label{eq:again}
  &
  \int|\partial_ru_{R,\delta}|^2(g_3''-g_1) 
  + \int|\nabla_{\!\tau} u_{R,\delta}|^2\left(\frac{g_3'}{|x|}-g_1\right)
  +\int|u_{R,\delta}|^2\left(\lambda_1 g_1 + \lambda_2\lambda_1^{\frac12} g_2\right)
  \nonumber \\
  & \ \ \ 
  +\int|u_{R,\delta}|^2\left(\frac12\Delta G_1 -\frac14\Delta^2 G_3\right)
  -\lambda_1^{\frac12}\Im\int\overline{u_{R,\delta}}\nabla u_{R,\delta}\cdot\nabla G_2 +
  \lambda_2\,\Im\int u_{R,\delta}\,\nabla \overline{u_{R,\delta}}\cdot\nabla G_3
  \nonumber
  \\
  &
  =
  \Re\int \widetilde f_{R,\delta} \, G_1 \,\overline{u_{R,\delta}}
  +\lambda^{\frac12}\Im\int \widetilde f_{R,\delta} \, G_2 \, \overline{u_{R,\delta}}
  -\frac12 \, \Re\int \widetilde f_{R,\delta}\,\overline{u_{R,\delta}} \, \Delta G_3
  -\Re\int \widetilde f_{R,\delta}\,\nabla \overline{u_{R,\delta}}\cdot\nabla G_3
  \,.
\end{align}
Choosing $g_1:=\frac12 g_3''$
and $g_2 := \sgn(\lambda_2) \, g_3'$,
the last identity becomes
\begin{align*}
& 
\frac12\int g_3''
\left(|\partial_ru_{R,\delta}|^2+\lambda_1 \, |u_{R,\delta}|^2\right)
-\sgn(\lambda_2) \, \lambda_1^{\frac12} \,
\Im \int g_3'' \, \overline{u_{R,\delta}} \, \partial_r u_{R,\delta}
+\int|\nabla_{\!\tau} u_{R,\delta}|^2\left(\frac{g_3'}{r}-\frac{g_3''}{2}\right)
\\
& \ \ \ 
+\frac14
  \int|u_{R,\delta}|^2\left(\Delta G_3''-\Delta^2 G_3\right)
  +|\lambda_2| \, \lambda_1^{\frac12}\int g_3'\, |u_{R,\delta}|^2
  +\lambda_2 \, \Im\int g_3' \, u_{R,\delta} \, \partial_r\overline{u_{R,\delta}}
 \\
 & 
 =\frac12\,\Re\int \widetilde f_{R,\delta} \, g_3'' \, \overline{u_{R,\delta}}
  +\lambda_1^{\frac12}\sgn(\lambda_2) \,
  \Im\int \widetilde f_{R,\delta} \, g_3' \, \overline{u_{R,\delta}}
  -\frac12 \, \Re\int \widetilde f_{R,\delta} \, \overline{u_{R,\delta}} \, \Delta G_3
  -\Re\int \widetilde f_{R,\delta} \, \nabla\overline{u_{R,\delta}}\cdot\nabla G_3
  \,,
\end{align*}
where $G_3''(x) := g_3''(|x|)$.
Choosing now $G_3(x) := |x|^2$,
the tangential and radial derivatives of~$u$ sum up and we obtain
\begin{align}\label{last}
& 
\int\left(|\nabla u_{R,\delta}|^2+\lambda_1 \, |u_{R,\delta}|^2\right)
-2\sgn(\lambda_2) \, \lambda_1^{\frac12} \,
\Im\int \overline{u_{R,\delta}} \, \partial_r u_{R,\delta}
  +2 \, |\lambda_2| \, \lambda_1^{\frac12} \int|x||u_{R,\delta}|^2
  +2\lambda_2\,\Im\int|x| \, u_{R,\delta} \, \partial_r\overline{u_{R,\delta}}
 \nonumber \\
 & 
 =(1-d) \, \Re\int \widetilde f_{R,\delta} \, \overline{u_{R,\delta}}
  +2\lambda_1^{\frac12}\sgn(\lambda_2) \,
  \Im\int \widetilde f_{R,\delta}\,|x|\,\overline{u_{R,\delta}}
  -2\,\Re\int \widetilde f_{R,\delta} \, x \cdot \nabla\overline{u_{R,\delta}}
  \,.
\end{align}
Using 
\begin{equation}\label{using}
|\nabla u^-_{R,\delta}|^2=
\left|\nabla u_{R,\delta}
-i\sgn(\lambda_2)\,\lambda_1^{\frac12}\frac{x}{|x|}u_{R,\delta}\right|^2
=
|\nabla u_{R,\delta}|^2+\lambda_1|u_{R,\delta}|^2-2\sgn(\lambda_2)\lambda_1^{\frac12}
\Im \left(\overline{u_{R,\delta}}\,\partial_r u_{R,\delta}\right)
  \,,
\end{equation}
we can rewrite~\eqref{last} as follows:
\begin{align*}
  & \int |\nabla u^-_{R,\delta}|^2
  +2\,|\lambda_2|\,\lambda_1^{\frac12}\int|x||u_{R,\delta}|^2
  +2\lambda_2\,\Im\int|x|\,u_{R,\delta} \, \partial_r\overline{u_{R,\delta}}
 \\
 & 
 =(1-d)\,\Re\int \widetilde f_{R,\delta}\,\overline{u_{R,\delta}}
  +2\lambda_1^{\frac12}\sgn(\lambda_2)\,\Im\int \widetilde f_{R,\delta}\,|x|\,\overline{u_{R,\delta}}
  -2\Re\int \widetilde f_{R,\delta} \, x \cdot \nabla\overline u_{R,\delta}
  \,.
\end{align*}

Subtracting from the last identity
equation~\eqref{eq:id1} with the choice 
$G_1(x):=|\lambda_2|\,\lambda_1^{-\frac12}|x|$,
we arrive at 
\begin{align*}
  & \int|\nabla u^-_{R,\delta}|^2
  -\frac{(d-1)}{2}\,|\lambda_2|\,\lambda_1^{-\frac12}\int\frac{|u_{R,\delta}|^2}{|x|}
  +|\lambda_2| \, \lambda_1^{\frac12}\int|x||u_{R,\delta}|^2
  \\
  & 
  \ \ \ 
  +|\lambda_2|\,\lambda_1^{-\frac12}\int|x||\nabla u_{R,\delta}|^2
  +2\lambda_2\,\Im\int |x| \, u_{R,\delta} \, \partial_r\overline{u_{R,\delta}}
  \\
  & 
=(1-d) \, \Re\int \widetilde f_{R,\delta} \, \overline u_{R,\delta}
  +2\lambda_1^{\frac12}\sgn(\lambda_2)\,
  \Im\int \widetilde f_{R,\delta}\,|x|\,\overline{u_{R,\delta}}
  -2 \, \Re\int \widetilde f_{R,\delta} \, x \cdot \nabla\overline{u_{R,\delta}}
  -|\lambda_2| \, \lambda_1^{-\frac12}
  \, \Re\int \widetilde f_{R,\delta}\,|x|\,\overline{u_{R,\delta}}
  \,.
\end{align*}
Using~\eqref{using} again,
we obtain the key identity
\begin{multline}
\label{eq:id4}
  I := \int \left|\nabla u^-_{R,\delta}\right|^2
  +\frac{|\lambda_2|}{\lambda_1^{\frac12}}\int |x|\left|\nabla u^-_{R,\delta}\right|^2
  -\frac{(d-1)}{2}\frac{|\lambda_2|}{\lambda_1^{\frac12}}\int\frac{|u_{R,\delta}|^2}{|x|}
  \\
  =
  \underbrace{(1-d)\,\Re\int \widetilde f_{R,\delta} \, \overline{u_{R,\delta}}}_{I_1}
  \underbrace{-2\,\Re\int |x|\,\widetilde f_{R,\delta}
  \left(\partial_r\overline{u_{R,\delta}}
  +i\sgn(\lambda_2)\lambda_1^{\frac12}\overline{u_{R,\delta}}\right)}_{I_2}
  \underbrace{-\frac{\lambda_2}{\lambda_1^{\frac{1}{2}}}
  \Re\int|x| \, \widetilde f_{R,\delta} \, \overline{u_{R,\delta}}}_{I_3}
\,.
\end{multline}

By the weighted Hardy inequality
  \begin{equation}\label{eq:hardyweight}
    \forall\psi\in C^\infty_0(\mathbb R^d) \,, \qquad
    \int\frac{|\psi|^2}{|x|}\leq\frac{4}{(d-1)^2}\int|x||\nabla \psi|^2
  \,,
  \end{equation}
and the facts that $u_{R,\delta}\in  C^\infty_0(\mathbb R^d)$ 
and $|u_{R,\delta}|=|u^-_{R,\delta}|$, 
we easily bound the left hand side of~\eqref{eq:id4} 
from below by a positive quantity as follows
\begin{equation}\label{eq:new}
   I
   \geq
   \int \big|\nabla u^-_{R,\delta}\big|^2
   +\frac{|\lambda_2|}{\lambda_1^{\frac12}}\frac{d-3}{d-1}
  \int|x||\nabla u^-_{R,\delta}|^2
  \,.
\end{equation}
We proceed by estimating the individual terms 
on the right hand side of~\eqref{eq:id4}
by means of $\|\nabla u^-_{R,\delta}\|^2$.
By the Schwarz inequality, the Hardy inequality~\eqref{Hardy}
and thanks to~\eqref{eq:newright2}, we have
\begin{equation}\label{eq:est1}
  |I_1| \leq(d-1) \, \|x\widetilde f_{R,\delta}\| \left\|\frac{u_{R,\delta}}{|x|}\right\|
  = (d-1) \, \|x\widetilde f_{R,\delta}\| 
  \left\|\frac{u^-_{R,\delta}}{|x|}\right\|
  \leq\frac{2(d-1)}{d-2}
  \left(\Lambda\,\|\nabla u^-_{R,\delta}\|^2
  +\epsilon^2(R) \, \|\nabla u^-_{R,\delta}\|\right)
  \,.
\end{equation}
Since
$
  \partial_r \overline u_{R,\delta}
  +i\lambda_1^{\frac12}\sgn(\lambda_2)\,\overline{u_{R,\delta}}
  =\overline{\partial_r u^-_{R,\delta}}
$, 
we may write
\begin{equation}\label{eq:est2}
  |I_2| 
  \leq 2\,\|x\widetilde f_{R,\delta}\| \|\partial_r u^-_{R,\delta}\|
  \leq 2\,\|x\widetilde f_{R,\delta}\| \|\nabla u^-_{R,\delta}\|
  \leq 2\left(\Lambda\,\|\nabla u^-_{R,\delta}\|^2
  +\epsilon^2(R) \, \|\nabla u^-_{R,\delta}\|\right)
  \,.
\end{equation}
If $\lambda_2 \not= 0$,
we also need to estimate the term~$I_3$.
First notice that identity~\eqref{eq:id2}
with the constant choice $G_2(x) := \frac{\lambda_2}{|\lambda_2|}$, 
immediately gives the $L^2$-bound
\begin{equation*}
  \|u_{R,\delta}\|^2 
  \leq |\lambda_2|^{-1} \int |\widetilde f_{R,\delta}| |u_{R,\delta}| 
  \,.
\end{equation*}
As a consequence, since $|\lambda_2|\leq\lambda_1$, 
we have
\begin{align}
|I_3| &\leq
\frac{|\lambda_2|}{\lambda_1^{\frac12}} \|x\widetilde f_{R,\delta}\| \|u_{R,\delta}\|
\leq
\left(\Lambda\,\|\nabla u^-_{R,\delta}\|+\epsilon^2(R)\right) 
\sqrt{\int |\widetilde f_{R,\delta}| |u_{R,\delta}|} 
\nonumber
\\
&
\leq
\left(\Lambda\,\|\nabla u^-_{R,\delta}\|+\epsilon^2(R)\right)  
\|x\widetilde f_{R,\delta}\|^{\frac12} \left\|\frac{u_{R,\delta}}{|x|}\right\|^\frac12
\nonumber
\\
& 
\leq
\Lambda^{\frac32}\frac{\sqrt 2}{\sqrt{d-2}} \,
\|\nabla u^-_{R,\delta}\|^2
+
\epsilon^2(R)\frac{\sqrt 2}{\sqrt{d-2}} \, \|\nabla u^-_{R,\delta}\|
\left(\Lambda^{\frac12} \, \|\nabla u^-_{R,\delta}\|
  +\epsilon(R)\right)
  \,.
  \label{eq:est3}
\end{align}

Applying the estimates~\eqref{eq:new}, 
\eqref{eq:est1}, \eqref{eq:est2} and~\eqref{eq:est3} in~\eqref{eq:id4}, 
we obtain
\begin{align*}
&
  \left(
  1-\frac{2(2d-3)}{d-2}\Lambda-\frac{\sqrt 2}{\sqrt{d-2}}\,\Lambda^{\frac32}
  \right)
  \int|\nabla u^-_{R,\delta}|^2
  +\frac{|\lambda_2|}{\lambda_1^{\frac12}}
  \frac{d-3}{d-1}\int|x||\nabla u^-_{R,\delta}|^2
  \\
  &
  \leq
  \epsilon^2(R) \, \big\|\nabla u^-_{R,\delta}\big\|
  \left(\frac{4d-6}{d-2}+
  \frac{\sqrt 2}{\sqrt{d-2}}\,\Lambda^{\frac12}\|\nabla u_{R,\delta}^-\|
  -\frac{\sqrt 2}{\sqrt{d-2}}\,\epsilon(R)\right)
  \,.
\end{align*}
For fixed $R$, let $\delta\to0$ in the last inequality; 
since $u_{R,\delta}$ is compactly supported, 
by the dominated convergence theorem, one gets
\begin{align*}
&
  \left(
  1-\frac{2(2d-3)}{d-2}\Lambda-\frac{\sqrt 2}{\sqrt{d-2}}\,\Lambda^{\frac32}
  \right)
  \int|\nabla u^-_{R}|^2
  +\frac{|\lambda_2|}{\lambda_1^{\frac12}}\frac{d-3}{d-1}\int|x||\nabla u^-_{R}|^2
  \\
  &
  \leq
  \epsilon^2(R) \, \big\|\nabla u^-_{R}\big\|
  \left(\frac{4d-6}{d-2}+
  \frac{\sqrt 2}{\sqrt{d-2}}\,\Lambda^{\frac12}\|\nabla u_{R}^-\|
  -\frac{\sqrt 2}{\sqrt{d-2}}\,\epsilon(R)\right)
  \,.
\end{align*}
Let finally $R\to\infty$; 
by the monotone convergence theorem
and the fact that $u^-\in H^1(\mathbb R^d)$, we conclude that
\begin{equation}\label{eq:finally}
  \left(
  1-\frac{2(2d-3)}{d-2}\Lambda-\frac{\sqrt 2}{\sqrt{d-2}}\,\Lambda^{\frac32}
  \right)
  \int|\nabla u^-|^2
  +\frac{|\lambda_2|}{\lambda_1^{\frac12}}\frac{d-3}{d-1}\int|x||\nabla u^-|^2\leq0
  \,.
\end{equation}
By virtue of~\eqref{eq:assf.Lambda}, 
it follows that~$u^-$ and thus~$u$  
are identically equal to zero. 

\paragraph{\fbox{Case $|\lambda_2|>\lambda_1$.}} 
Let $u\in H^1(\mathbb R^d)$ be a solution to \eqref{eq.weak}.
Choosing as a test function $v=\pm u$ in \eqref{eq.weak}, and taking real and imaginary parts of the resulting identities, one easily gets
\begin{equation}\label{eq:outside}
  (\lambda_1\pm\lambda_2)\int|u|^2
  =\int|\nabla u|^2+\Re\int f\overline u\pm\Im\int f\overline u.
\end{equation}
By the Schwarz inequality,
the Hardy inequality~\eqref{Hardy}
and assumption~\eqref{eq:assf},
we estimate
\begin{equation*}
  \Re\int f\overline u\pm\Im\int f\overline u
  \leq2\int|f||u|\leq2\,\|xf\|\left\|\frac{u}{|x|}\right\|\leq
  \frac{4}{d-2}\,\Lambda\int|\nabla u|^2
  \,.
\end{equation*}
Consequently, \eqref{eq:outside}~yields
\begin{equation*}
  (\lambda_1\pm\lambda_2)\int|u|^2
  \geq\left(1-\frac{4}{d-2}\Lambda\right)\int|\nabla u|^2
  \,.
\end{equation*}
Notice that \eqref{eq:assf.Lambda} implies that $\Lambda<\frac{d-2}{4}$, 
therefore the last inequality forces $\lambda_1\pm\lambda_2\geq0$ 
unless $u=0$. 
Since we assume $|\lambda_2|>\lambda_1$, we conclude that $u=0$.
\end{proof}

By taking $f:=Vu$ in Theorem~\ref{Thm.mult}
(notice that $Vu$ belongs to $H^{-1}(\Real^d)$
under the hypothesis~\eqref{Ass.bis.better})
and using that $|u|=|u^-|$, 
we immediately obtain
\begin{Corollary}\label{Corol.mult}
Let $d \geq 3$ and suppose
\begin{equation}\label{Ass.bis.better}
  \forall \psi \in H^1(\Real^d) 
  \,, \qquad
  \int_{\Real^d} |x|^2 \, |V(x)|^2 \, |\psi(x)|^2 \, dx
  \leq \Lambda^2 \int_{\Real^d} |\nabla\psi|^2
  \,,
\end{equation}
where~$\Lambda$ satisfies~\eqref{eq:assf.Lambda}.
Then 
$ 
  \sigma_\mathrm{p}(H_V) = \varnothing
$.
\end{Corollary}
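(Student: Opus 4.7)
The plan is to argue by contradiction: suppose that $\lambda$ is an eigenvalue of $H_V$ with a non-trivial eigenfunction $u \in \Dom(H_V) \subset H^1(\Real^d)$, so that $H_V u = \lambda u$. Setting $f := Vu$, the eigenvalue equation reads $\Delta u + \lambda u = f$ in the weak sense of~\eqref{eq.weak}, and it suffices to check that the hypotheses of Theorem~\ref{Thm.mult} are met, which will force $u = 0$ and give the contradiction.

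First I would observe that assumption~\eqref{Ass.bis.better} implies~\eqref{Ass}: by the Schwarz inequality and the classical Hardy inequality~\eqref{Hardy},
\begin{equation*}
  \int_{\Real^d} |V| \, |\psi|^2
  \leq \|xV\psi\| \, \left\|\frac{\psi}{|x|}\right\|
  \leq \frac{2\Lambda}{d-2} \int_{\Real^d} |\nabla\psi|^2,
\end{equation*}
and~\eqref{eq:assf.Lambda} entails $\Lambda < (d-2)/2$, so that the form-subordination bound is less than one. In particular, $Vu \in H^{-1}(\Real^d)$ and the weak formulation of the eigenvalue equation reduces to~\eqref{eq.weak} with $f = Vu$. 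Moreover, the argument reproducing~\eqref{Ass} forces $\Re \lambda > 0$ for any eigenvalue (since $\Re h_V[\psi] \geq (1-a)\|\nabla\psi\|^2 \geq 0$ together with the sectoriality of $h_V$ rules out $\Re\lambda \leq 0$ for an eigenfunction).

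Next I would verify the two smallness conditions on $f$. The second inequality in~\eqref{eq:assf} is immediate:
\begin{equation*}
  \|xf\|^2 = \int_{\Real^d} |x|^2 \, |V|^2 \, |u|^2
  \leq \Lambda^2 \int_{\Real^d} |\nabla u|^2 = \Lambda^2 \|\nabla u\|^2.
\end{equation*}
For the first inequality I would use the key observation $|u^-(x)| = |u(x)|$ coming from~\eqref{eq:uepsilon}, so that
\begin{equation*}
  \|xf\|^2 = \int_{\Real^d} |x|^2 \, |V|^2 \, |u^-|^2.
\end{equation*}
Since $u^- = e^{\pm i \sgn(\lambda_2)\lambda_1^{1/2}|x|} u$ with a Lipschitz phase, $u^- \in H^1(\Real^d)$, and~\eqref{Ass.bis.better} applied to $\psi = u^-$ yields $\|xf\| \leq \Lambda \|\nabla u^-\|$ as required.

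Having checked all hypotheses of Theorem~\ref{Thm.mult} (with the same $\Lambda$ satisfying~\eqref{eq:assf.Lambda}), we conclude $u \equiv 0$, contradicting the non-triviality of the eigenfunction. The only subtle point I expect is the $H^1$-regularity of the twisted function $u^-$ and the legitimacy of inserting it into the quadratic form inequality~\eqref{Ass.bis.better}; this is handled by a standard density argument since $C_0^\infty(\Real^d)$ is dense in $H^1(\Real^d)$ and the bilinear form appearing in~\eqref{Ass.bis.better} extends continuously to $H^1(\Real^d)$.
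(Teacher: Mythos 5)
Your proposal is correct and follows essentially the same route as the paper: set $f := Vu$, note $|u| = |u^-|$ to get both inequalities in~\eqref{eq:assf} from~\eqref{Ass.bis.better}, invoke Theorem~\ref{Thm.mult} for $\Re\lambda > 0$, and use the derived subordination~\eqref{Ass} to show that eigenvalues with $\Re\lambda \leq 0$ would force $\nabla u = 0$, hence $u = 0$ in $L^2(\Real^d)$. The appeal to ``sectoriality'' is a slight red herring --- what is actually needed is the coercivity $\Re h_V[u] \geq (1-a)\|\nabla u\|^2$ combined with the eigenvalue identity $\Re h_V[u] = \Re\lambda \|u\|^2$, and likewise the closing ``density argument'' is unnecessary since~\eqref{Ass.bis.better} is assumed directly for all $\psi \in H^1(\Real^d)$ and $u^- \in H^1(\Real^d)$ by the Lipschitz-phase observation --- but these are cosmetic imprecisions, not gaps.
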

\begin{proof}
In fact, Theorem~\ref{Thm.mult} only gives the weaker conclusion
that no complex point~$\lambda$ satisfying $\Re\lambda> 0$ 
can be an eigenvalue of~$H_V$.
However, \eqref{Ass.bis.better} with~\eqref{eq:assf.Lambda} implies~\eqref{Ass}, 
which in turn yields that all possible eigenvalues of~$H_V$ 
are included in the right complex plane, \ie~$\Re\lambda>0$.
Indeed, this fact follows from the identity
\begin{equation}\label{id1}
  \int |\nabla u|^2 + \Re \int V |u|^2 = \Re\lambda \int |u|^2
  \,, 
\end{equation}
which can be obtained from~\eqref{eq:id1} 
with the constant choice $G_1:=1$ and $f:=Vu$.
\end{proof}

Now we are in a position to prove Theorem~\ref{Thm.any}.
\begin{proof}[Proof of Theorem~\ref{Thm.any}]
Theorem~\ref{Thm.any} follows as a weaker version 
of Corollary~\ref{Corol.mult}.
Indeed, it is easy to see that
any~$\Lambda$ verifying~\eqref{eq:assf.Lambda} 
necessarily satisfies $\Lambda \leq (d-2)/2$.
Using the latter in the former, we obtain~\eqref{Ass.bis} 
as a sufficient condition which guarantees~\eqref{Ass.bis.better}.
\end{proof}

We now turn our attention to Theorem \ref{thm:radi}. 
In analogy with the above strategy, 
we first study the (more difficult) part $\Re\lambda>0$.
In the following, we set $V_1:=\Re V$ and $V_2 := \Im V$.
\begin{Theorem}\label{Thm.mult2}
Let $d \geq 3$.
Let $u \in \nspace(\Real^d)$ be a solution of~\eqref{eq:main} 
with $\Re\lambda >0$, and let $f:=Vu$ where~$V$ satisfies
\eqref{eq:assV12}, \eqref{eq:asspart}, \eqref{eq:assV2} and \eqref{eq:assf.Lambdanew}.
Then $u =0$.
\end{Theorem}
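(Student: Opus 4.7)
The plan is to adapt the multiplier argument of Theorem~\ref{Thm.mult} to the complex-valued setting $f = Vu$, decomposing $V = \Re V + i\,\Im V$ and treating the two parts separately. As in Theorem~\ref{Thm.mult}, I would split into the two regimes $|\lambda_2|\leq\lambda_1$ and $|\lambda_2|>\lambda_1$, where $\lambda_1 := \Re\lambda$ and $\lambda_2 := \Im\lambda$.

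In the regime $|\lambda_2|>\lambda_1$, I would first test the weak formulation~\eqref{eq.weak} with $v=\pm u$ and take real and imaginary parts of the resulting identities to obtain $\lambda_1\|u\|^2 = \|\nabla u\|^2 + \int \Re V\,|u|^2$ and $\lambda_2\|u\|^2 = \int \Im V\,|u|^2$. Combining $\lambda_1\|u\|^2 \geq (1-b_1^2)\|\nabla u\|^2$ from \eqref{eq:assV12}, the Cauchy--Schwarz/Hardy bound $|\lambda_2|\|u\|^2 \leq \frac{2b_3}{d-2}\|\nabla u\|^2$ from \eqref{eq:assV2} and \eqref{Hardy}, and the strict inequality $|\lambda_2|>\lambda_1$, would yield $\bigl(1 - b_1^2 - \tfrac{2b_3}{d-2}\bigr)\|\nabla u\|^2 < 0$, which is ruled out by the first condition in~\eqref{eq:assf.Lambdanew} unless $u \equiv 0$.

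In the regime $|\lambda_2|\leq\lambda_1$, I would reproduce the cutoff-mollification $u_{R,\delta}:=(u\xi_R)*\phi_\delta$ of Theorem~\ref{Thm.mult} and apply the multiplier identities~\eqref{eq:id1}--\eqref{eq:id3} with $G_3(x):=|x|^2$, $g_1 := \tfrac12 g_3''$, $g_2 := \sgn(\lambda_2)\,g_3'$, then subtract~\eqref{eq:id1} with $G_1 := |\lambda_2|\lambda_1^{-1/2}\,r$ to derive the key identity of the form~\eqref{eq:id4} for $f=Vu$. The crucial new observation is that the factorization $\partial_r\overline u + i\sgn(\lambda_2)\lambda_1^{1/2}\overline u = e^{i\sgn(\lambda_2)\lambda_1^{1/2}r}\,\overline{\partial_r u^-}$ together with $u\,e^{-i\sgn(\lambda_2)\lambda_1^{1/2}r}=u^-$ allows, after integration by parts, to rewrite
\[
  I_1 + I_2 = \int \partial_r(r\,\Re V)\,|u|^2 + 2\int r\,\Im V\,\Im\bigl(u^-\,\overline{\partial_r u^-}\bigr),
\]
in which the unsigned $\Re V\cdot|u|^2$ contributions cancel. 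Since $|u^-|=|u|$ and $u^-\in\nspace(\Real^d)$, the hypotheses~\eqref{eq:asspart} and~\eqref{eq:assV2} applied with test function $\psi=u^-$, together with Cauchy--Schwarz and $|\partial_r u^-|\leq|\nabla u^-|$, would give $|I_1+I_2| \leq (b_2^2 + 2b_3)\|\nabla u^-\|^2$. The residual term $I_3 = -\tfrac{\lambda_2}{\lambda_1^{1/2}}\int r\,\Re V\,|u|^2$ would be bounded as in~\eqref{eq:est3} by combining Cauchy--Schwarz with the $L^2$-estimate $|\lambda_2|\|u\|^2 \leq \tfrac{2b_3}{d-2}\|\nabla u\|^2$ (from~\eqref{eq:id2} with constant $G_2=\sgn(\lambda_2)$) and \eqref{Hardy}, yielding $|I_3| \leq \tfrac{1}{4}\sqrt{b_3}\bigl(\tfrac{2}{d-2}\bigr)^{3/2}\|\nabla u^-\|^2 + o(1)$ as $R\to\infty$. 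The weighted Hardy inequality~\eqref{eq:hardyweight} then gives $I \geq \int|\nabla u^-|^2 + \tfrac{|\lambda_2|}{\lambda_1^{1/2}}\tfrac{d-3}{d-1}\int r|\nabla u^-|^2$ on the left-hand side of~\eqref{eq:id4}. Passing to the limits $\delta\to 0$ and then $R\to\infty$ by dominated and monotone convergence would produce
\[
  \Bigl(1 - b_2^2 - 2b_3 - \tfrac{1}{4}\sqrt{b_3}\bigl(\tfrac{2}{d-2}\bigr)^{3/2}\Bigr)\!\int|\nabla u^-|^2 + \tfrac{|\lambda_2|}{\lambda_1^{1/2}}\tfrac{d-3}{d-1}\!\int r\,|\nabla u^-|^2 \leq 0,
\]
forcing $\nabla u^-\equiv 0$, and hence $u\equiv 0$, by the second condition in~\eqref{eq:assf.Lambdanew}.

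The main obstacle is the bound on $I_3$: since $(\Re V)_+$ is not assumed to be form-subordinate under the hypotheses of the theorem, the integral $\int r\,\Re V\,|u|^2$ cannot be controlled by a direct Schwarz estimate on $\|xVu\|$ as in the proof of Theorem~\ref{Thm.mult}. It must instead be controlled indirectly, going through the $L^2$-bound on $|\lambda_2|\|u\|^2$ coming from the imaginary part of the eigenvalue equation, and careful tracking of numerical constants is required to match the second condition in~\eqref{eq:assf.Lambdanew}.
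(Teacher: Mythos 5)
Your outline follows the paper's strategy closely and gets the two regimes, the cut-off/mollification scheme, the multiplier choices, the decomposition of $I_1+I_2$ via integration by parts in $\Re V$, and the bound $|I_2|\le 2b_3\|\nabla u^-_R\|^2$ correct. The treatment of $|\lambda_2|>\lambda_1$ is also essentially the paper's. But the treatment of $I_3=-\frac{\lambda_2}{\lambda_1^{1/2}}\int r\,\Re V\,|u|^2$ has a genuine gap which your closing paragraph flags but does not resolve.

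The difficulty is that neither the Schwarz inequality nor the $L^2$-bound $|\lambda_2|\|u_R\|^2\lesssim\frac{2b_3}{d-2}\|\nabla u_R^-\|^2$ gives any control over $\int r\,\Re V\,|u|^2$, because none of the hypotheses~\eqref{eq:assV12}, \eqref{eq:asspart}, \eqref{eq:assV2} bounds the \emph{positive} part $(\Re V)_+$ or $r(\Re V)_+$ in any integrated sense. So this term cannot be placed on the right-hand side and estimated in absolute value. The paper instead moves $\frac{\lambda_2}{\lambda_1^{1/2}}\int r\,\Re V\,|u_R|^2$ to the left-hand side, where (for $\lambda_2\ge 0$, to which one may reduce by passing to $\overline V$ and $\overline\lambda$) the $(\Re V)_+$-contribution is nonnegative and can simply be dropped, while the $(\Re V)_-$-contribution is controlled by~\eqref{eq:assV12}. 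Crucially, to do this one must \emph{not} apply the weighted Hardy inequality~\eqref{eq:hardyweight} as you propose via~\eqref{eq:new}; instead the paper rewrites
\begin{equation*}
\frac{|\lambda_2|}{\lambda_1^{1/2}}\int r\,|\nabla u_R^-|^2
-\frac{d-1}{2}\frac{|\lambda_2|}{\lambda_1^{1/2}}\int\frac{|u_R^-|^2}{r}
=\frac{|\lambda_2|}{\lambda_1^{1/2}}\int\left|\nabla\left(r^{1/2}u_R^-\right)\right|^2
-\frac14\frac{|\lambda_2|}{\lambda_1^{1/2}}\int\frac{|u_R^-|^2}{r},
\end{equation*}
keeps the full $\int|\nabla(r^{1/2}u_R^-)|^2$ to absorb the $(\Re V)_-$ contribution via~\eqref{eq:assV12} (this is where $b_1^2<1$ is used), and only the remaining Hardy-type term $\frac14\frac{|\lambda_2|}{\lambda_1^{1/2}}\int\frac{|u_R^-|^2}{r}$ is estimated via the $L^2$-bound and the Hardy inequality, producing the coefficient $\frac14\sqrt{b_3}(\frac{2}{d-2})^{3/2}$ that you attributed to $I_3$. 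Once you discard the $\int|\nabla(r^{1/2}u_R^-)|^2$ term by invoking~\eqref{eq:hardyweight}, you no longer have a positive weighted gradient term available to absorb the $(\Re V)_-$ part of $I_3$, and the argument breaks down.
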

\begin{proof}
The proof is completely analogous to that of Theorem~\ref{Thm.mult}. 
The only difference consists in the way 
we handle the right-hand side of \eqref{eq:id4}, as we see in the sequel.

\paragraph{\fbox{Case $|\lambda_2|\leq\lambda_1$.}} 
With the same notations as above, 
if $u\in \nspace(\Real^d) \subset H^1(\mathbb R^d)$ solves \eqref{eq:main}, 
then identity \eqref{eq:id4} holds. 
We now need to rewrite the right-hand side of \eqref{eq:id4} in a suitable way. 
To this aim, recall that $\widetilde f_R$ is defined via \eqref{eq:mainR}, 
where $f:=Vu$. It is convenient to introduce the notation
\begin{equation}\label{eq:newright22}
  K_R(u,\nabla u):=-2\nabla\xi_R\cdot\nabla u-u\Delta\xi_R.
\end{equation}
so that $\widetilde f_R=f_R+K_R(u,\nabla u)$.
Putting~\eqref{eq:newright22} into~\eqref{eq:id4}, 
integrating by parts in the first two terms involving~$V_1$
and taking the limit as $\delta\to0$, one gets the following key identity:
\begin{multline}
\label{eq:id6}
I:=
  \int \big|\nabla u^-_{R}\big|^2
   +\frac{|\lambda_2|}{\lambda_1^{\frac12}}
  \int|x||\nabla u^-_{R}|^2
  -\frac{(d-1)}{2}\frac{|\lambda_2|}{\lambda_1^{\frac12}}\int\frac{|u_{R}|^2}{|x|}
  +
 \frac{\lambda_2}{\lambda_1^{\frac12}}\int|x|V_1|u_R|^2
  \\
  =
 \underbrace{\int|u_R^-|^2\left(V_1+|x|\partial_rV_1\right)}_{I_1}
 +
 \underbrace{2\Im\int|x|V_2u_R
  \left(\partial_r\overline{u_R}
  +i\sgn(\lambda_2)\lambda_1^{\frac12}\overline{u_R}\right)}_{I_2}
\\
+\underbrace{(1-d)\Re\int K_R(u,\nabla u)\overline{u_R}-2\Re\int|x|K_R(u,\nabla u)
\left(\partial_r\overline{u_R}+i\sgn(\lambda_2)\lambda_1^{\frac12}\overline{u_R}\right)
-\frac{\lambda_2}{\lambda_1^{\frac12}}\Re\int|x|K_R(u,\nabla u)\overline{u_R}}_{I_3}
\,.
\end{multline}

We start by estimating the individual terms 
on the right hand side of~\eqref{eq:id6}. 
Thanks to assumption \eqref{eq:asspart}, we have
\begin{equation}\label{eq:i1}
  I_1
  =
  \int|u_R^-|^2\partial_r(|x|V_1)
  \leq
  \int|u_R^-|^2\left[\partial_r(|x|V_1)\right]_+
  \leq b_2^2\int|\nabla u_R^-|^2.
\end{equation}
We now use
$
  \partial_r \overline u_{R}
  +i\lambda_1^{\frac12}\sgn(\lambda_2)\,\overline{u_{R}}
  =\overline{\partial_r u^-_{R}}
$
to write
\begin{equation}\label{eq:i3}
|I_2|
\leq
2\|xV_2u_R\|\|\partial_ru_R^-\|
\leq
2\|xV_2u_R\|\|\nabla u_R^-\|
\leq2b_3\int|\nabla u_R^-|^2.
\end{equation}
Finally, 
by \eqref{eq:xi} and the fact that $u_R\in H^1(\mathbb R^d)$, 
one easily gets that
\begin{equation}\label{eq:i4}
  |I_3|\leq\epsilon^2(R),
  \qquad
  \lim_{R\to\infty}\epsilon^2(R)=0.
\end{equation}

We now proceed by estimating the left-hand side of \eqref{eq:id6} from below. 
By \eqref{eq:assV12} we obtain
\begin{equation}\label{eq:estv14}
  \frac{|\lambda_2|}{\lambda_1^{\frac12}}\int |x|V_1|u_R|^2
  \geq
  -\frac{|\lambda_2|}{\lambda_1^{\frac12}}\int (V_1)_
  -\left||x|^{\frac12}u_R^-\right|^2
  \geq -b_1^2\frac{|\lambda_2|}{\lambda_1^{\frac12}}
  \int\left|\nabla\left(|x|^{\frac12}u_R^-\right)\right|^2
  \,.
\end{equation}
Now write
\begin{equation}\label{eq:inuova}
  \frac{|\lambda_2|}{\lambda_1^{\frac12}}
  \int|x||\nabla u^-_{R}|^2
  -\frac{(d-1)}{2}\frac{|\lambda_2|}{\lambda_1^{\frac12}}\int\frac{|u_{R}|^2}{|x|}
  =
  \frac{|\lambda_2|}{\lambda_1^{\frac12}}
  \int\left|\nabla\left(|x|^{\frac12}u_R^-\right)\right|^2
  -\frac14\frac{|\lambda_2|}{\lambda_1^{\frac12}}
  \int\frac{\left|u_R^-\right|^2}{|x|}
  \,.
\end{equation}
Notice that identity~\eqref{eq:id2}
with the constant choice $G_2(x) := \frac{\lambda_2}{|\lambda_2|}$, 
in the limit as $\delta\to0$, reads as follows
\begin{equation*}
  |\lambda_2|\int|u_R|^2=\frac{\lambda_2}{|\lambda_2|}\int V_2|u_R|^2+
  \frac{\lambda_2}{|\lambda_2|}\Im\int K_R(u,\nabla u)\overline{u_R}.
\end{equation*}
Since $u_R\in H^1(\mathbb R^d)$, 
arguing as in \eqref{eq:1000}, by \eqref{eq:assV2}, \eqref{eq:xi} 
and the fact that $|u_R|=|u_R^-|$, we obtain the $L^2$-bound
\begin{equation}\label{eq:arguing}
  \|u_R\|^2\leq|\lambda_2|^{-1}\left(\frac{2b_3}{d-2}\int|\nabla u_R^-|^2+\epsilon^2(R)\right),
  \qquad
  \lim_{R\to\infty}\epsilon^2(R)=0.
\end{equation}
As a consequence of \eqref{eq:arguing}, since $|\lambda_2|\leq\lambda_1$, 
we can estimate the last term in \eqref{eq:inuova}, 
by the Schwarz and Hardy inequalities as follows:
\begin{equation}\label{eq:estl2}
  \frac{|\lambda_2|}{\lambda_1^{\frac12}}
  \int\frac{\left|u_R^-\right|^2}{|x|}
  \leq
  \frac{|\lambda_2|}{\lambda_1^{\frac12}}
  \left\|\frac{u_R^-}{|x|}\right\|\left\|u_R^-\right\|
  \leq \sqrt{b_3} \,
  \left(\frac{2}{d-2}\right)^{\frac32}\int\left|\nabla u_R^-\right|^2
  + \frac{2}{d-2} \, \|\nabla u_R^-\| \, |\epsilon(R)| \,,
\end{equation}
where $\epsilon(R)$ is the error term from~\eqref{eq:arguing}.
By \eqref{eq:estv14}, \eqref{eq:inuova}, and \eqref{eq:estl2}, we conclude that
\begin{equation}\label{eq:stimai}
 I
 \geq
 \left[1-\frac{1}{4} \sqrt{b_3} \, \left(\frac{2}{d-2}\right)^{\frac32}\right]
 \int\left|\nabla u_R^-\right|^2
 - \frac{1}{4} \frac{2}{d-2} \, \|\nabla u_R^-\| \, |\epsilon(R)|
  \,.
\end{equation}

Applying \eqref{eq:i1}, \eqref{eq:i3}, \eqref{eq:i4} and \eqref{eq:stimai} 
in \eqref{eq:id6}, we obtain
\begin{equation*}
\left[1-b_2^2-2 \, b_3 
  - \frac{1}{4} \sqrt{b_3} \, \left(\frac{2}{d-2}\right)^{\frac32}\right]
\int\left|\nabla u_R^-\right|^2
 \leq
 \epsilon^2(R) + \frac{1}{4} \frac{2}{d-2} \, \|\nabla u_R^-\| \, |\epsilon(R)|
 \,,
\end{equation*}
for any $R>0$,
with $ \lim_{R\to\infty}\epsilon^2(R)=0$.
In the limit as $R\to\infty$, 
by the monotone convergence theorem, we finally get
\begin{equation}\label{eq:id7}
\left[1-b_2^2-2 \, b_3 
  - \frac{1}{4} \sqrt{b_3} \, \left(\frac{2}{d-2}\right)^{\frac32}\right]
\int\left|\nabla u^-\right|^2\leq0 \,.
\end{equation}
By virtue of~\eqref{eq:assf.Lambdanew}, 
it follows that~$u^-$ and thus~$u$  
are identically equal to zero. 

\paragraph{\fbox{Case $|\lambda_2|>\lambda_1$.}} 
The proofs in this case is based on identity~\eqref{eq:outside}. 
When $f:=Vu$, it reads as follows:
\begin{equation}\label{eq:outsideV}
  (\lambda_1\pm\lambda_2)\int|u|^2
  =
  \int|\nabla u|^2+\int V_1|u|^2\pm\int V_2|u|^2
  \geq
  \int|\nabla u|^2-\int (V_1)_-|u|^2-\left|\int V_2|u|^2\right|.
\end{equation}
By means of \eqref{eq:1000}, \eqref{eq:assV12} and \eqref{eq:assV2}, we have
\begin{equation*}
(\lambda_1\pm\lambda_2)\int|u|^2
\geq
\left[1-b_1^2-\frac{2b_3}{d-2}\right]
\int|\nabla u|^2.
\end{equation*}
Therefore, condition \eqref{eq:assf.Lambdanew} 
implies that $\lambda_1\pm\lambda_2\geq0$, 
and since $|\lambda_2|>\lambda_1$ we conclude that $u$ is identically zero.
\end{proof}

Now we are in a position to prove Theorem~\ref{thm:radi}.
\begin{proof}[Proof of Theorem~\ref{thm:radi}]
Theorem \ref{Thm.mult2} implies that
$\sigma_\mathrm{p}(H_V)\cap\{\lambda_1>0\}=\varnothing$. 
In addition, if $\lambda_1\leq0$, 
then choosing $v:=u$ in~\eqref{eq.weak}
and taking the resulting real part, one obtains
\begin{equation*}
  \lambda_1\int|u|^2
  = \int|\nabla u|^2+\int V_1\,|u|^2
  \geq \int|\nabla u|^2 - \int (V_1)_-\,|u|^2
  \geq (1-b_1^2)\int|\nabla u|^2
  \,,
\end{equation*}
where the last inequality follows by~\eqref{eq:assV12}. 
This implies that $\sigma_\mathrm{p}(H_V)\cap\{\lambda_1\leq 0\}=\varnothing$, 
so the proof is completed. 
\end{proof}
We conclude the manuscript with the proof of Theorem~\ref{thm:radimagn}. 
Since the strategy is identical to the proof of Theorem~\ref{thm:radi}, 
we just sketch it.
\begin{proof}[Proof of Theorem~\ref{thm:radimagn}]
Equation \eqref{eq:main} is now replaced by
\begin{equation}\label{eq:mainmagn}
  \Delta_A u+\lambda u = V u
  \,,
\end{equation}
where $\Delta_A := \nabla_{\!A} \cdot \nabla_{\!A}$.
Let $u\in \nspace_{\!A}(\mathbb R^d)$ be a weak solution to~\eqref{eq:mainmagn}. 
By similar algebraic manipulations as in the proof of Theorem~\ref{thm:radi}, 
we get an analogue to~\eqref{eq:id6}:
\begin{align*}
&
   \int \big|\nabla_{\!A} u^-_{R}\big|^2
   +\frac{|\lambda_2|}{\lambda_1^{\frac12}}\frac{d-3}{d-1}
  \int|x||\nabla_{\!A} u^-_{R}|^2
  \leq
\int|u_R|^2\left(V_1+|x|\partial_rV_1\right)
 +
 \frac{\lambda_2}{\lambda_1^{\frac12}}\int|x|V_1|u_R|^2
 \\
&
\ \ \ 
 +
 2\Im\int|x|u_RV_2\left(\overline{\partial_r^Au_R}
+i\sgn(\lambda_2)\lambda_1^{\frac12}\overline{u_R}\right)
 -2\Im\int|x|u_RB_\tau\cdot\overline{\nabla_{\! A}u_R}
 \\
&
\ \ \ 
+ (1-d)\Re\int K_R(u,\nabla_{\!A} u)\overline{u_R}-2\Re\int|x|K_R(u,\nabla_{\!A} u)
\left(\partial_r^A\overline{u_R}
+i\sgn(\lambda_2)\lambda_1^{\frac12}\overline{u_R}\right)
\\
&
\ \ \ 
-\frac{\lambda_2}{\lambda_1^{\frac12}}\Re\int|x|K_R(u,\nabla_{\!A} u)\overline{u_R}
\,,
\end{align*}
where $\partial_r^A := \frac{x}{|x|}\cdot\nabla_{\!A}$.
In fact, in order to obtain the last identity, 
one proceeds exactly as above, 
with the only difference arising once obtaining identity \eqref{eq:id3}, 
in which we use the test function 
$v:=\nabla G_3\cdot\nabla_Au_{R,\delta}+\Delta G_3u_{R,\delta}$.
The key remark is that~$B_\tau$ is a tangential vector, so that 
$$
B_\tau\cdot\nabla_{\!A}u 
= B_\tau\cdot\left(\nabla_{\!A}u+i\sgn(\lambda_2)\lambda_1^{\frac12}\frac{x}{|x|}u\right)
=B_\tau\cdot\nabla_{\!A}u_R^- \,,
$$
and we can rewrite the last inequality as
\begin{align*}
&
  \int \big|\nabla_A u^-_{R}\big|^2
   +\frac{|\lambda_2|}{\lambda_1^{\frac12}}\frac{d-3}{d-1}
  \int|x||\nabla_A u^-_{R}|^2
  \\
  &
  \leq
\int|u_R|^2\left(V_1+|x|\partial_rV_1\right)
 +
\frac{\lambda_2}{\lambda_1^{\frac12}}\int|x|V_1|u_R|^2
 +
2\Im\int|x|u_R\overline{\nabla_Au_R}\cdot\left(V_2\frac{x}{|x|}-B_\tau\right)
 \\
&
\ \ \ 
+ (1-d)\Re\int K_R(u,\nabla_A u)\overline{u_R}-2\Re\int|x|K_R(u,\nabla_A u)
\left(\partial_r^A\overline{u_R}+i\sgn(\lambda_2)\lambda_1^{\frac12}\overline{u_R}\right)
\\
&
\ \ \ -\frac{\lambda_2}{\lambda_1^{\frac12}}\Re\int|x|K_R(u,\nabla_A u)\overline{u_R}
\,.
\end{align*}
One now proceeds in the same way as in the magnetic-free case, 
to conclude that $u=0$ if $\Re\lambda>0$. 
To complete the proof, we then argue exactly as above; we omit further details.
\end{proof}
%

\subsection*{Acknowledgment}
%
The first two authors acknowledge the hospitality of
the \emph{Basque Center for Applied Mathematics} in Bilbao
where this work was initiated and completed.
The first author also acknowledges the hospitality of 
the \emph{Nuclear Physics Institute} in \v{R}e\v{z}
where this work was developed in part. 
The research was partially supported
by the project RVO61389005, the GACR grant No.\ 14-06818S, 
and the MIUR grant FIRB 2012 -- RBFR12MXPO-002.

%
%

\providecommand{\bysame}{\leavevmode\hbox to3em{\hrulefill}\thinspace}
\providecommand{\MR}{\relax\ifhmode\unskip\space\fi MR }
\providecommand{\MRhref}[2]{%
  \href{http://www.ams.org/mathscinet-getitem?mr=#1}{#2}
}
\providecommand{\href}[2]{#2}

\end{document}